\documentclass[11pt, reqno]{amsart}

\usepackage{amsmath, amsthm, amssymb}
\usepackage{graphicx}
\usepackage{hyperref}
\usepackage{tikz-cd}
\usepackage{color}
\usepackage{enumitem}
\usepackage[all,cmtip]{xy}
\usepackage[all]{xy}
\newtheorem{theorem}{Theorem}[section]
\newtheorem{lemma}[theorem]{Lemma}

\theoremstyle{definition}
\newtheorem{definition}[theorem]{Definition}
\newtheorem{example}[theorem]{Example}
\newtheorem{remark}[theorem]{Remark}

\newcommand{\cl}{\mathrm{Cl}}
\newcommand{\R}{\mathbb{R}}
\newcommand{\Z}{\mathbb{Z}}
\newcommand{\C}{\mathcal{C}}

\newcommand{\V}{\mathcal{V}}
\newcommand{\con}[2]{\mathrm{con}_{#1}(#2)}
\newcommand{\ex}{\mathrm{Ex}}

\title[Component-Perfect MDM Functions ]{Component-Perfect Multidimensional Discrete Morse Functions}

\author{Mehmetcik Pamuk}
\address{Department of Mathematics, Middle East Technical University, Ankara 06531, Turkey}
\email{mpamuk@metu.edu.tr}

\author{Hanife Varl\i}
\address{Department of Mathematics, Çankırı Karatekin University, Çankırı 18100, Turkey}
\email{hanifevarli@karatekin.edu.tr}

\date{\today}

\begin{document}

\begin{abstract}
We introduce the concept of component-perfect multidimensional discrete Morse (MDM) functions on finite regular CW complexes. While perfect discrete Morse functions in the classical sense require the number of critical cells of index $p$ to exactly equal the $p$-th Betti number, the multiparameter framework necessitates a topological dynamics perspective, where critical cells naturally cluster into connected components. Building upon the existing notion of strictly perfect MDM functions, we propose the more flexible definition of component-perfection.

We also show how to restrict and extend MDM functions on finite regular CW complexes, laying the groundwork for more complex topological operations.
\end{abstract}

\maketitle

\section{Introduction}
\label{sec:intro}

Since its introduction by Marston Morse in the 1920s, Morse theory has been a 
powerful tool in the study of smooth manifolds, describing their topology through 
the critical points of smooth functions defined on them. In the 1990s, Robin Forman 
developed a discrete analogue of this theory \cite{Forman1998, Forman2002} that has 
become a foundational tool in combinatorial topology and topological data analysis 
(TDA). A discrete Morse function on a finite cell complex $K$ assigns a real number 
to each cell in such a way that the natural order given by cell dimension is 
respected, except at most in one face or coface for each cell. The cells that 
violate no such exception are the \emph{critical cells}, and they control the 
topology of $K$ via the discrete Morse inequalities: the number of critical 
$p$-cells is at least the $p$-th Betti number $\beta_p(K)$.

A discrete Morse function achieving this lower bound, having exactly $\beta_p(K)$ 
critical $p$-cells for each $p$, is called \emph{perfect}. Perfect discrete Morse 
functions are the most computationally desirable, providing minimal cell 
decompositions of the underlying space and enabling efficient homology computation 
\cite{MischaikowNanda2013}. However, they do not always exist: torsion in homology 
is an obstruction to $\Z$-perfection, and certain acyclic but non-collapsible 
complexes such as the dunce hat and Bing's house admit no perfect discrete Morse 
function at all. Even on triangulated manifolds, finding a perfect discrete Morse 
function is NP-hard in general \cite{JoswigPfetsch2006}.

Despite these obstructions, large and natural classes of manifolds do admit perfect 
discrete Morse functions. Every closed oriented surface admits a $\Z$-perfect 
discrete Morse function \cite{LewinerLopes2003}, and every closed surface admits a 
$\Z_2$-perfect one \cite{AyalaVilches2012}. In dimension 3, any manifold of the 
form $F \times S^1$, where $F$ is a closed oriented surface, admits a $\Z$-perfect 
discrete Morse function, as do their connected sums.

The behavior of perfect discrete Morse functions under topological surgeries, such as the connected sum operation on closed oriented manifolds, has been a fruitful area of study in the classical setting \cite{Varl2018,MramorKosta2018}. Extending such geometric operations to the multiparameter setting requires a highly flexible notion of perfection, which motivates the theoretical framework developed in this paper.

\subsection*{Multiparameter Persistence and Multidimensional Discrete Morse Theory}

In many modern applications of TDA, data is most naturally parametrized by 
multiple parameters simultaneously, for instance, scale and density in point cloud 
analysis, or the presence of multiple noise sources in biological imaging 
\cite{VipEtAl2021}. Single-parameter persistent homology, while powerful, can miss 
topological structure that only appears when several parameters vary together. This 
has driven the development of multiparameter persistent homology (MPH), 
defined on filtrations indexed by $\R^k$ for $k \geq 2$ 
\cite{CarlssonZomorodian2009}. One of the main computational challenges in MPH is 
the size of the complexes involved; reducing them while preserving homotopy type is 
therefore a central goal.

Discrete Morse theory provides a natural reduction strategy: an acyclic partial 
matching on a simplicial complex $K$ collapses non-critical pairs and leaves a 
smaller complex homotopy equivalent to $K$. Extending this strategy to the 
multiparameter setting requires a corresponding extension of discrete Morse theory 
itself to vector-valued functions $f: K \to \R^k$. A first step was taken by Allili, 
Kaczynski, Landi, and Masoni \cite{Allili2019}, who defined multidimensional discrete 
Morse (MDM) functions, established the existence of a gradient vector field, and 
developed algorithms for acyclic partial matchings in the multiparameter setting.     
A systematic and complete theoretical development of MDM theory was then carried out 
by Brouillette, Allili, and Kaczynski \cite{Brouillette2024}.  They established, among 
other results, the handle decomposition and collapsing theorems, and derived Morse 
inequalities and Morse decompositions in the multiparameter setting.

A key feature distinguishing MDM theory from the classical ($k=1$) case is the 
nature of critical points. While a real-valued discrete Morse function has isolated critical cells, an MDM function generically produces critical cells that cluster into connected critical components, in close analogy with the Pareto sets that appear in smooth singularity theory for vector-valued functions \cite{BudneyKaczynski2023, Smale1975, Wan1975}.
This clustering is not a pathology 
but a geometric feature: it reflects the multi-dimensional structure of the 
codomain and is essential for capturing the richer topological information that 
multiparameter persistence encodes.

To formally handle these clusters, Brouillette et al.\ \cite{Brouillette2024} 
partition critical cells into equivalence classes under 
dynamical connectivity. Because the multidimensional discrete gradient flow $\Pi_f$ 
is inherently multivalued, these critical components cannot be analyzed via single 
isolated cells. Instead, the theory treats each entire critical component as an 
isolated invariant set with respect to the multiparameter flow. In the 
framework of combinatorial topological dynamics, a set of cells $S$ is an 
isolated invariant set if it admits an isolating neighborhood, a closed subcomplex 
$N$ containing $S$ such that any gradient flow path lying entirely within $N$ is 
necessarily contained entirely within $S$ \cite{Batko2020}. This topological 
isolation ensures that the component acts as a single, indivisible dynamical entity 
from the perspective of the flow. Brouillette et al.\ prove that, under an acyclicity 
condition on the MDM function, these isolated invariant sets induce a well-defined 
Morse decomposition of the complex, yielding generalized Morse inequalities expressed 
in terms of their discrete Conley indices \cite{Brouillette2024}.

 In classical discrete Morse theory, perfect functions are highly sought after because they yield minimal cell complexes, optimizing homology computations \cite{MischaikowNanda2013}.  Computing multiparameter persistent homology presents significant computational challenges, primarily due to the combinatorial explosion in the size of the associated algebraic complexes \cite{CarlssonZomorodian2009}. A perfect MDM function provides the  theoretical bound for topological data reduction: it compresses the multiparameter filtration down to the absolute minimal number of homological generators required by the manifold, stripping away all trivial artifacts without altering the underlying homotopy type.

Our paper bridges the two lines of research described above. We  
study the notion of a perfect MDM function, extending the classical concept 
to the multiparameter setting. The multi-valued nature of the critical point theory 
necessitates two distinct definitions of perfection:

\begin{itemize}
    \item A \emph{strictly perfect} MDM function has exactly $\beta_p(K)$ critical cells of index $p$ for each $p$. This is the direct analogue of classical perfection, but we show it is a highly rigid and non-generic condition.
    \item A \emph{component-perfect} MDM function satisfies $m_p = \beta_p(K)$ for 
    all $p$, where $m_p$ is the sum of the $p$-th Conley coefficients of the 
    critical components. This is the natural and flexible notion of perfection in the 
    multiparameter setting, allowing critical components to consist of multiple 
    cells provided their collective Conley index matches the topological 
    requirement.
\end{itemize}

We prove that strictly perfect implies component-perfect but not conversely, and 
discuss when each class is non-empty. 

The paper is organized as follows. In Section~\ref{sec:prelims}, we recall the 
necessary background on MDM functions, gradient fields, Morse decompositions, and 
Conley indices, following \cite{Allili2019} and \cite{ Brouillette2024}. In addition, we establish the 
foundational restriction and extension properties for an MDM function. In 
Section~\ref{sec:perfect_def}, we introduce both notions of perfection and discuss 
their relationship and existence. In a subsequent paper, we will build upon this theoretical framework to explicitly construct and decompose component-perfect MDM functions on connected sums.

\section{Preliminaries}
\label{sec:prelims}
In this section, we recall necessary basic notions of discrete and multidimensional discrete Morse theory.
\subsection{Regular cell complexes}
 Let $K$ be a finite regular CW complex and $K_p$ be the set of open cells of dimension $p$, briefly called $p$-cells, for any $p\geq 0$. For any cell $\sigma\in K_p$ and $\tau\in K_{q}$ for $p\leq q$, we denote $\sigma\subset \tau$ if the closure of $\tau$ contains $\sigma$, and we say $\sigma$ is a face of $\tau$ or $\tau$ is a coface of $\sigma$. 

Let $S$ be a subset of cells in $K$.
The closure of $S$, denoted $\cl(S)$, is the smallest 
subcomplex of $K$ containing $S$; equivalently,
$\cl(S) = \bigl\{\sigma \in K \;\bigm|\;
\sigma \subseteq \tau
\text{ for some } \tau \in S \bigr\}$.  
The exit set of $S$ is $\mathrm{Ex}(S) = \cl(S) \setminus S$.

Let $\sigma^{(p)}, \tau^{(p+1)}$ be $p$- and $(p+1)$-cells in $K$ such that $\sigma \subset \tau$. If there is no other $(p+1)$-cell $\tau'\in K$ such that $\sigma \subset \tau'$, then we say that $\sigma$ is a free face of $\tau$. In this case, we call the pair $(\sigma, \tau)$ a collapse pair in $K$. Let $L$ be a subcomplex of $K$. If $L$ is obtained by removing a finite sequence of collapse pairs from $K$, then we say that $K$ collapses to $L$.

\subsection{Discrete Morse Functions}\label{DMF}
Let $K$ be a finite regular CW complex of dimension $d$ and  $K_p$ be the set of open $p$-cells of $K$. Let $f\colon K\rightarrow \mathbb{R}$ be a real valued function on $K$. For any $p$-cell $\sigma \in K_p$, consider the sets $$H_f(\sigma)=\{ \tau \in K_{p+1} \  | \ \tau \supset \sigma  \text{ and } f(\tau)\leq f(\sigma) \}$$ and $$T_f(\sigma)=\{ \nu \in K_{p-1} \ | \ \nu \subset \sigma \text{ and } f(\nu)\geq f(\sigma) \}.$$ 

We say $f$ is a discrete Morse function if for any $p$-cell 
$\sigma \in K$, $|H_f(\sigma)|\leq 1$ and $|T_f(\sigma)|\leq 1$, where $|\cdot|$ denotes the cardinality of the related set. 

Observe that the numbers $|H_f(\sigma)|$ and $|T_f(\sigma)|$ cannot be both one. If  $|H_f(\sigma)|$ and $|T_f(\sigma)|$ are both zero, $\sigma$ is called a critical $p$-cell. Otherwise, it is called a regular cell. 

In other words, a discrete Morse function on $K$ is a function whose values
increase with the dimension of cells, with at most one exception per cell.

Let us consider the collection of pairs of cell given in the set $$
V = \{(\sigma, \tau) \mid \dim \sigma=\dim \tau-1, \sigma \subset \tau\}.
$$ If each cell in $K$ is at most one pair in $V$, then $V$ is called a discrete vector field on $K$. Moreover, if $f(\sigma)\geq f(\tau)$ for every pair $(\sigma, \tau)$ in the discrete vector field $V$, then $V$ is called the discrete gradient vector field of $f$. 

Recall that the number of critical $p$-cells of a discrete Morse function $f$ is always greater than or equal to the $p$-th Betti number, $\beta_p(K)$, of $K$ (with respect to the given coefficient ring) \cite{Forman1998}. A classical discrete Morse function is called perfect (with respect to the given coefficient ring) if the number of critical $p$-cells exactly equals $\beta_p(K)$ for all $p$.

\subsection{Multidimensional Discrete Morse Functions}
We denote by $\preceq$  the partial order on
$\mathbb{R}^k$ defined component-wise: $a \preceq b$ if and only if $a_i \leq b_i$ for
all $i = 1, \dots, k$. We write $a \prec b$ when $a \preceq b$ and $a \neq b$. 

Allili et al.\ \cite{Allili2019} defines the multidimensional discrete Morse function for finite simplicial complexes in \cite[Definition~11]{Allili2019}.  However,  we observe that this definition of an MDM function extends to a finite regular CW complex $K$ without further modifications. 
But we first state a useful property of regular CW complexes given in \cite[Theorem~1.3]{Forman1998}. 

 \begin{theorem}\label{thm:property_regularCW}
 Let $K$ be a regular CW complex and $\nu^{(p-1)}, \sigma^{(p)}, \tau^{(p+1)}\in K$. If $\nu\subset \sigma\subset \tau$, then there exists a $p$-cell $\tilde{\sigma}$ such that $\tilde{\sigma}\neq \sigma$ and $\nu\subset \tilde{\sigma} \subset \tau$. 
\end{theorem}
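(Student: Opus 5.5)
The plan is to prove this from the topology of the closed cell $\overline{\tau}$. We use regularity twice: the attaching map of $\tau$ is a homeomorphism onto its image, and the closure of every cell of a regular complex is a subcomplex. It follows that $\overline{\tau}$ is a closed $(p+1)$-ball $B$ whose boundary sphere $\partial B\cong S^p$ is a subcomplex $L$ of $K$. The cells of $L$ are exactly the proper faces of $\tau$. In particular $\sigma$ and $\nu$ are cells of $L$, and $\sigma$ is a top-dimensional cell of the $p$-dimensional complex $L$.

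The key step is a local homology computation at a point $x\in\nu$. First, since $L\cong S^p$ is a closed $p$-manifold, $H_p(L,L\setminus\{x\})\cong\Z$. Second, suppose for contradiction that $\sigma$ is the only $p$-cell of $L$ having $\nu$ as a face. Regularity gives $\overline{\sigma}\cong D^p$ with $\nu$ a $(p-1)$-cell of $\partial\overline{\sigma}\cong S^{p-1}$. Then near $x$ the space $L$ is locally the union of $\overline{\sigma}$ with cells of dimension at most $p-1$ (the other cells containing $\nu$ in their closure). More precisely, the open star of $\nu$ in $L$ consists of $\nu$, $\sigma$, and possibly other cells of dimension $\le p-1$. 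However, no cell of dimension $<p$ other than $\nu$ itself can contain $\nu$ in its closure except cells of dimension $\ge p$, because faces have strictly larger dimension than subfaces. So the open star of $\nu$ is just $\nu\cup\sigma$, and a neighbourhood of $x$ in $L$ is homeomorphic to a neighbourhood of a boundary point of the half-space $D^p$. Hence $H_p(L,L\setminus\{x\})=0$, which is a contradiction. Therefore at least one other $p$-cell $\tilde\sigma\neq\sigma$ of $L$ contains $\nu$ in its closure, and $\tilde\sigma\subset\tau$ because every cell of $L$ is a face of $\tau$.

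The main obstacle is making the local picture near $x$ rigorous, namely showing that a point in the interior of a boundary face of $\overline{\sigma}$ has a neighbourhood in $L$ that is a half-ball. This requires knowing that $\nu$ sits in $\partial\overline{\sigma}$ as a "nice" top cell, which again comes from regularity: $\partial\overline{\sigma}$ is a regular CW $(p-1)$-sphere. I would handle it by excision to the open star $\nu\cup\sigma$, using the characteristic map of $\sigma$ to identify the pair with $(D^p, D^p\setminus\{y\})$ for a boundary point $y$, whose relative homology vanishes.

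An alternative, more combinatorial route, which is the one I would fall back on, is the cellular boundary formula $\partial\partial\tau=0$ with incidence numbers $[\tau:\sigma],[\sigma:\nu]\in\{0,\pm1\}$. In a regular complex $[\tau:\sigma]=\pm1$ exactly when $\sigma$ is a face, and likewise for $[\sigma:\nu]$. The coefficient of $\nu$ in $\partial\partial\tau$ is $\sum_{\sigma'}[\tau:\sigma'][\sigma':\nu]=0$. The term for $\sigma$ is $\pm1$, so some other $\sigma'$ must contribute a nonzero term, and that $\sigma'$ is a $p$-cell with $\nu\subset\sigma'\subset\tau$. The only input needed is the regular-complex fact about incidence numbers, which I would cite from \cite{Forman1998}. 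This argument is shorter, and I would present it as the main proof.
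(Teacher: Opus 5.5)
The paper gives no proof of this statement. It is quoted from Forman \cite[Theorem~1.3]{Forman1998}, so there is no internal argument to match, and your proposal is a correct proof of the cited fact.

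\textbf{Incidence-number argument.} Your main argument is sound. The coefficient of $\nu$ in $\partial\partial\tau=0$ is $\sum_{\sigma'}[\tau:\sigma'][\sigma':\nu]$, the $\sigma$-term is $\pm1$, and so some other term is nonzero. Regularity is needed only to make the $\sigma$-term nonzero. The direction you use at the end (a nonzero term forces $\nu\subset\tilde\sigma\subset\tau$) holds in any CW complex, because a nonzero incidence number forces the attaching map to cover the cell. Since every nonzero term is $\pm1$, the argument actually yields an even number, at least two, of $p$-cells between $\nu$ and $\tau$.

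\textbf{Local-homology argument.} This route is genuinely different. Instead of relying on the theorem that $[\tau:\sigma]=\pm1$ for faces of a regular complex (itself a local-degree computation on $\partial\overline{\tau}$), it uses directly that $\partial\overline{\tau}\cong S^p$ is a closed $p$-manifold. It is therefore more self-contained, and refined by counting local homology when $k$ such cells meet along $\nu$, it is the natural route to the sharper statement that there are exactly two such cells.

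Its write-up needs tidying, because the sentence about ``cells of dimension $<p$'' is garbled. You need three facts:
\begin{itemize}
\item every cell of $L$ whose closure contains $\nu$ has dimension $p-1$ or $p$, since $\dim L=p$, and the only such $(p-1)$-cell is $\nu$ itself;
\item the open star $\nu\cup\sigma$ is open in $L$, since its complement is a finite union of closed cells ($L$ is compact), and this justifies the excision;
\item the preimage of $\nu$ under the characteristic map of $\sigma$ is open in $S^{p-1}$, because $\nu$ is a top cell of the regular CW sphere $\partial\overline{\sigma}$.
\end{itemize}
With these, $H_p(L,L\setminus\{x\})\cong H_p(D^p,D^p\setminus\{y\})=0$ for a point $y\in\partial D^p$, and the contradiction with $H_p(L,L\setminus\{x\})\cong\Z$ goes through.
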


Let $K$ be a finite regular CW complex. For $f = (f_1, \dots, f_k): K \to \mathbb{R}^k$ and
$\sigma \in K_p$, define
\begin{align*}
H_f(\sigma) &= \{ \beta \in K_{p+1} \mid \beta \supset \sigma,\ f(\beta) \preceq f(\sigma) \}, \\
T_f(\sigma) &= \{ \alpha \in K_{p-1} \mid \alpha \subset \sigma,\ f(\alpha) \succeq f(\sigma) \}.
\end{align*}

\begin{definition} \label{def:mdm_function_on_regularCW}
A function $f = (f_1, \dots, f_k): K \to \mathbb{R}^k$ is a
multidimensional discrete Morse (MDM) function if for every $p$-cell in $K_p$:
\begin{enumerate}
    \item $|H_f(\sigma)| \leq 1$,
    \item $|T_f(\sigma)| \leq 1$,
    \item if $\beta^{(p+1)} \supset \sigma$ and $\beta \notin H_f(\sigma)$,
          then $f(\beta) \succ f(\sigma)$,
    \item if $\alpha^{(p-1)} \subset \sigma$ and $\alpha \notin T_f(\sigma)$,
          then $f(\alpha) \prec f(\sigma)$.
\end{enumerate}
A cell $\sigma$ is critical if $|H_f(\sigma)| = |T_f(\sigma)| = 0$,
and regular otherwise.
\end{definition}

The following Lemma is a straightforward extension of \cite[Proposition~12]{Allili2019} to finite regular CW complexes
because of the property given in Theorem \ref{thm:property_regularCW}. 
\begin{lemma}\label{lem:product}
Let $f: K \to \mathbb{R}^k$ be an MDM function on a finite regular CW complex $K$. Then $|H_f(\sigma)||T_f(\sigma)|=0$ for each $p$-cell $\sigma\in K$.
\end{lemma}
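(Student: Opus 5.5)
We argue by contradiction, following the template of the classical argument of Forman and its multidimensional version \cite[Proposition~12]{Allili2019}. Suppose some $p$-cell $\sigma$ has $|H_f(\sigma)| = |T_f(\sigma)| = 1$. Then there is a $(p+1)$-cell $\tau \supset \sigma$ with $f(\tau) \preceq f(\sigma)$, and a $(p-1)$-cell $\nu \subset \sigma$ with $f(\nu) \succeq f(\sigma)$. This gives the chain $\nu \subset \sigma \subset \tau$ with
\[
f(\tau) \preceq f(\sigma) \preceq f(\nu).
\]

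The key step is Theorem~\ref{thm:property_regularCW}. It provides a second $p$-cell $\tilde\sigma \neq \sigma$ with $\nu \subset \tilde\sigma \subset \tau$. We compare $f(\tilde\sigma)$ with $f(\nu)$ and with $f(\tau)$, splitting into cases according to whether $\nu \in T_f(\tilde\sigma)$ and whether $\tilde\sigma \in H_f(\nu)$.

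\emph{Case 1: $\nu \notin T_f(\tilde\sigma)$.} Condition (4) of Definition~\ref{def:mdm_function_on_regularCW} gives $f(\nu) \prec f(\tilde\sigma)$. Since $\tau \supset \tilde\sigma$, either $\tau \in H_f(\tilde\sigma)$, so $f(\tau) \preceq f(\tilde\sigma)$, or condition (3) gives $f(\tau) \succ f(\tilde\sigma)$.
\begin{itemize}
\item In the second subcase, $f(\tau) \succ f(\tilde\sigma) \succ f(\nu) \succeq f(\sigma) \succeq f(\tau)$. This forces $f(\tau) \succ f(\tau)$, which contradicts antisymmetry of $\preceq$.
\item In the first subcase, $\tilde\sigma$ and $\sigma$ are two distinct $p$-faces of $\tau$. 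We use condition (1) from the point of view of $\tau$'s faces; more precisely, we use $T_f(\tau)$, which has at most one element. Both $\sigma$ and $\tilde\sigma$ lie in $T_f(\tau)$, because $f(\sigma) \succeq f(\tau)$ and $f(\tilde\sigma) \succeq f(\tau)$. This contradicts $|T_f(\tau)| \le 1$.
\end{itemize}

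\emph{Case 2: $\nu \in T_f(\tilde\sigma)$.} Then $f(\nu) \succeq f(\tilde\sigma)$. Together with $f(\nu) \succeq f(\sigma)$, both $\sigma$ and $\tilde\sigma$ belong to $H_f(\nu)$, which contradicts $|H_f(\nu)| \le 1$.

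The cleanest organization is therefore to look at $\tilde\sigma$ and observe that it must satisfy exactly one of $f(\tilde\sigma) \preceq f(\nu)$ or $f(\tilde\sigma) \succ f(\nu)$, using condition (4) at $\tilde\sigma$.
\begin{itemize}
\item If $f(\tilde\sigma) \preceq f(\nu)$, then $|H_f(\nu)| \ge 2$.
\item If $f(\tilde\sigma) \succ f(\nu) \succeq f(\tau)$, then $f(\tilde\sigma) \succeq f(\tau)$, so $\tilde\sigma \in T_f(\tau)$ alongside $\sigma$, and $|T_f(\tau)| \ge 2$.
\end{itemize}
Either way we contradict conditions (1)--(2) of Definition~\ref{def:mdm_function_on_regularCW}.

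The only real obstacle is that $\preceq$ is a partial order, so we cannot assume $f(\tilde\sigma)$ is comparable to $f(\nu)$ or to $f(\tau)$. Conditions (3) and (4) are exactly what supply this comparability for face/coface pairs: every face of a cell is either in $T_f$ or strictly below it. Transitivity of $\preceq$ in $\R^k$ then completes each case. Beyond this, we only need to check that Theorem~\ref{thm:property_regularCW} replaces the simplicial argument of \cite{Allili2019} verbatim.
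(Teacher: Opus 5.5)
Your proof is correct and takes essentially the paper's route: contradiction, then Theorem~\ref{thm:property_regularCW} to produce $\tilde\sigma$, then the MDM conditions together with transitivity of $\preceq$. The paper closes the cycle $f(\nu)\prec f(\tilde\sigma)\prec f(\tau)\preceq f(\sigma)\preceq f(\nu)$ by using uniqueness and conditions (3)--(4) at $\nu$ and $\tau$; you instead apply condition (4) at $\tilde\sigma$ and contradict uniqueness at $\nu$ or at $\tau$, which is only a reshuffling of the same bookkeeping. (One small slip: in the first subcase of Case 1, the condition you contradict is (2), namely $|T_f(\tau)|\le 1$, not (1).)
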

\begin{proof}
Suppose, for contradiction, that $|H_f(\sigma)| = |T_f(\sigma)| = 1$
for some $p$-cell $\sigma$.  Let $\beta \in H_f(\sigma)$ (a
$(p+1)$-cell with $\beta \supset \sigma$ and $f(\beta) \preceq
f(\sigma)$) and $\alpha \in T_f(\sigma)$ (a $(p-1)$-cell with
$\alpha \subset \sigma$ and $f(\alpha) \succeq f(\sigma)$).  Then
$\alpha \subset \sigma \subset \beta$. So, by
Theorem~\ref{thm:property_regularCW}, there exists another $p$-cell
$\tilde{\sigma} \neq \sigma$ such that $\alpha \subset \tilde{\sigma}
\subset \beta$. Then, by Definition \ref{def:mdm_function_on_regularCW}, we obtain $f(\alpha) \prec f(\tilde{\sigma}) \prec
f(\beta) \preceq f(\sigma) \preceq f(\alpha),$ which is a contradiction.
\end{proof}
  
The following definitions are direct extensions of the corresponding definitions given in \cite{Allili2019} to the finite regular CW complexes. 

\begin{definition}\label{def:gradient_mdm}
Let $K$ be a finite regular CW complex and $f : K \to \mathbb{R}^k$ be an MDM function. The gradient vector
field of $f$ is the discrete vector field
\[
  \mathcal{V} = \bigl\{(\sigma,\, \beta) \;\bigm|\;
  \dim\beta = \dim\sigma + 1,\; \sigma \subset \beta,\;
  f(\beta) \preceq f(\sigma)\bigr\}.
\]
The fixed points of $\mathcal{V}$ (cells not appearing in any pair) are
precisely the critical cells of $f$.
\end{definition}

\begin{remark}
Conditions (1) and (2) are direct analogues of the classical Forman conditions. Conditions (3) and (4) are specific to the multiparameter setting: they ensure that the values of $f$ at facets and cofacets outside the gradient pairing are strictly comparable, which is needed to guarantee acyclicity of the gradient vector field. Note that when $k=1$, conditions (3) and (4) are automatically satisfied, and the definition reduces to Forman's.
\end{remark}

\begin{definition}\label{def:V-path}
Let $\mathcal{V}$ be a gradient vector field of an MDM function $f : K \to \mathbb{R}^k$ on a finite regular CW complex $K$. The sequence 
$$
\sigma_0\subset\beta_0\supset\sigma_1\subset\beta_1\supset\ldots\supset\sigma_n\subset\beta_{n}\supset\sigma_{n+1}
$$ is called a gradient path of dimension $p$ (shortly, a $p$-path) of $\mathcal{V}$ for $p\ge 1$ if $\dim \sigma_i=p-1$, $\dim \beta_i=p$, $(\sigma,\, \beta)\in \mathcal{V} $ and $\sigma_i\neq \sigma_{i+1}$ for $0\leq i \leq n$.     
\end{definition}

The following is an example of an MDM funtion on a $2$-dimensional simplicial complex $K$ and its gradient vector field. 
\begin{example}
\label{ex:MDMfunction} Consider the two parameter function 
 $f:K\to \mathbb{R}^2$ given in Figure \ref{fig:MDMfunction}. Clearly, $f$ is a MDM function whose critical cells are the ones in red.  To explicitly verify that these cells are critical, let us apply Definition~\ref{def:mdm_function_on_regularCW} to two examples from this set:
\begin{itemize}[leftmargin=*]
    \item For the vertex $f^{-1}((0,1))$: Checking its cofacets, we see the horizontal edge has value $(2,1)$ and the vertical edge has value $(3,2)$. Since neither $(2,1) \preceq (0,1)$ nor $(3,2) \preceq  (0,1)$ is true, the sublevel set of cofacets is empty ($|H_f(f^{-1}((0,1)))| = 0$). Because vertices have no facets, $|T_f(f^{-1}((0,1)))| = 0$. Thus, $v_1$ is critical.
    \item For the edge $f^{-1}((5,4))$: Checking its facets, they have values $(5,3)$ and $(2,2)$. Since neither $(5,3) \succeq (5,4)$ nor $(2,2) \succeq (5,4)$ is true, we have $|T_f(f^{-1}((5,4)))| = 0$. Its only cofacet in the complex has a value of $(5,7)$, and since $(5,7) \not\preceq (5,4)$, we have $|H_f(f^{-1}((5,4)))| = 0$. Thus, $f^{-1}((5,4))$ is also critical.
\end{itemize} 
\begin{figure}[htb]
\includegraphics[width=0.4\textwidth]{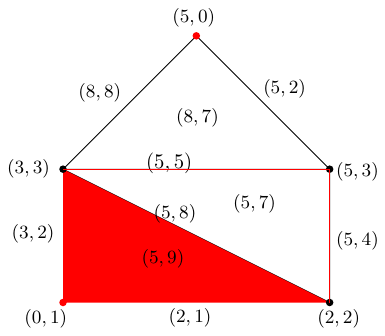}
\hspace{10mm}
\includegraphics[width=0.4\textwidth]{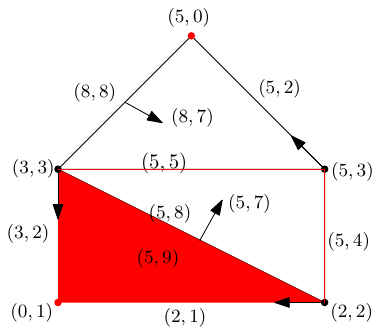}
\caption{A MDM function and its gradient vector field.}
\label{fig:MDMfunction}
\end{figure}
\end{example}

Now we give a useful property of a gradient vector field of an MDM function. 

\begin{lemma}\label{lem:1merge}
Let $K$ be a finite regular CW complex and $\mathcal{V}$ be a gradient vector field of an MDM function $f : K \to \mathbb{R}^k$. Then each $1$-path of $\mathcal{V}$ ends at a critical $0$-cell. Moreover, $1$-paths of $\mathcal{V}$ can merge but cannot split. 
\end{lemma}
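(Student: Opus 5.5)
The argument rests on two facts: in a regular CW complex every $1$-cell has exactly two distinct vertices as faces, and the gradient field $\mathcal{V}$ is acyclic.

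\textbf{Step 1: local structure of a $1$-path.} Consider a $1$-path $\sigma_0\subset\beta_0\supset\sigma_1\subset\beta_1\supset\cdots$. Here each $\sigma_i$ is a vertex, each $\beta_i$ is an edge, and $(\sigma_i,\beta_i)\in\mathcal{V}$.
\begin{itemize}
\item By regularity, $\beta_i$ has exactly two vertices.
\item Since $\sigma_{i+1}\neq\sigma_i$, the vertex $\sigma_{i+1}$ is the unique other endpoint of $\beta_i$.
\item By Lemma~\ref{lem:product}, or simply because $\mathcal{V}$ is a discrete vector field with each cell in at most one pair, the vertex $\sigma_{i+1}$ lies in at most one pair of $\mathcal{V}$. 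Vertices have no faces, so this pair, if it exists, has the form $(\sigma_{i+1},\beta_{i+1})$ with $\beta_{i+1}$ unique, namely the unique element of $H_f(\sigma_{i+1})$.
\end{itemize}
Hence each step of a $1$-path is forced: from a vertex there is at most one outgoing continuation. This gives ``cannot split''. Merging remains possible, since two different edges $\beta,\beta'$ paired with different vertices may share their other endpoint $\sigma_{i+1}$. Nothing prevents a vertex from being the free endpoint of several paired edges, and the example in Figure~\ref{fig:MDMfunction} can be cited to illustrate this.

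\textbf{Step 2: the path terminates.} I would show that $f$ strictly decreases along the path. We have $f(\beta_i)\preceq f(\sigma_i)$ because $\beta_i\in H_f(\sigma_i)$.

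For $\sigma_{i+1}\subset\beta_i$, I claim $f(\sigma_{i+1})\prec f(\beta_i)$. By condition (4) of Definition~\ref{def:mdm_function_on_regularCW} this holds unless $\sigma_{i+1}\in T_f(\beta_i)$. But $T_f(\beta_i)=\emptyset$: $\beta_i$ has $|H_f(\sigma_i)|$-type pairing downward already, and Lemma~\ref{lem:product} applied to $\beta_i$ does not directly give this. Instead, note that if $\sigma_{i+1}\in T_f(\beta_i)$, then $\beta_i\in H_f(\sigma_{i+1})$ as well. Then $\beta_i$ would be paired with both $\sigma_i$ and $\sigma_{i+1}$, contradicting that $\mathcal{V}$ is a discrete vector field. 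To make this fully rigorous, I would use condition (2), $|T_f(\beta_i)|\le1$, together with $\sigma_i\in T_f(\beta_i)$: this forces $T_f(\beta_i)=\{\sigma_i\}$, and then condition (4) gives $f(\sigma_{i+1})\prec f(\beta_i)$.

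Thus
\[
f(\sigma_0)\succeq f(\beta_0)\succ f(\sigma_1)\succeq f(\beta_1)\succ\cdots .
\]
Since $\prec$ is a strict partial order, no vertex can repeat. As $K$ is finite, the path must stop. It can stop only at a vertex $\sigma_{n+1}$ with no outgoing pair, i.e.\ $H_f(\sigma_{n+1})=\emptyset$. Since $T_f$ is empty for vertices, $\sigma_{n+1}$ is a critical $0$-cell.

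\textbf{Main obstacle.} The delicate point is justifying the strict inequality $f(\sigma_{i+1})\prec f(\beta_i)$, which is what rules out cycles. The argument via $|T_f(\beta_i)|\le 1$ handles it, so I expect the rest to be routine. One should also interpret ``ends'' as a maximal $1$-path, which is what the termination argument produces.
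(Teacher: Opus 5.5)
Your proof is correct and follows the same outline as the paper's: a maximal $1$-path must terminate, its last vertex has $H_f=\emptyset$ (and trivially $T_f=\emptyset$) so it is critical, and splitting is ruled out by condition (1) of Definition~\ref{def:mdm_function_on_regularCW}. The one real difference is that the paper only asserts that the path ``strictly descends without repeating,'' whereas you prove it: condition (2) gives $T_f(\beta_i)=\{\sigma_i\}$, and condition (4) then gives $f(\sigma_{i+1})\prec f(\beta_i)$, so your termination step is more complete than the paper's.
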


\begin{proof}
Let $\sigma_0 \subset \beta_0 \supset \sigma_1 \subset \dots \supset \sigma_n$ be a $1$-path in $\mathcal{V}$, where the $\sigma_i$ are $0$-cells and the $\beta_i$ are $1$-cells. Because $K$ is a finite regular CW complex and the gradient path strictly descends through regular cells without repeating, every $1$-path must be finite and eventually terminate at some $0$-cell $v$. If $v$ were regular, it would have to be paired with some $1$-cell $\beta$ such that $v \subset \beta$ and $f(\beta) \preceq f(v)$, allowing the path to continue. Since the path terminates at $v$, no such $1$-cell exists, meaning $|H_f(v)| = 0$. Since vertices have no proper faces, $|T_f(v)| = 0$ is trivially true. Thus, $v$ is a critical $0$-cell.

To see that $1$-paths cannot split, suppose a path reaches a vertex $\sigma_i$. For the path to split, $\sigma_i$ would need to be paired with two distinct $1$-cells $\beta$ and $\beta'$ in $\mathcal{V}$, meaning both would belong to $H_f(\sigma_i)$. However, by Condition (1) of Definition \ref{def:mdm_function_on_regularCW}, $|H_f(\sigma_i)| \le 1$. Therefore, there is at most one outgoing arrow from any vertex, and the path has a unique continuation. Paths may merge, however, because multiple $1$-cells can share a common vertex without violating the MDM conditions.
\end{proof}

Now we show the restriction of an MDM function to a subcomplex is an MDM function.

\begin{lemma}\label{lem:restricted_MDM}
Let $K$ be a finite regular CW complex, $L$ be a subcomplex of $K$ and $f : K \to \mathbb{R}^k$ an MDM function on $K$. Then, the restriction of $f$ to $L$ is an MDM function. Moreover, if $\sigma\in L$ is a critical $p$-cell of $f$, then it is critical for the restriction function to $L$.
\end{lemma}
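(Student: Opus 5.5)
The plan is to compare the sets $H$ and $T$ for $f$ and for the restriction $g = f|_L$ directly, using only that $L$ is a subcomplex. Since $L$ is a subcomplex of the regular CW complex $K$, it is itself a finite regular CW complex. Moreover, for a $p$-cell $\sigma \in L$, its faces and cofaces in $L$ are exactly the faces and cofaces in $K$ that happen to lie in $L$. From this I would first record the two identities
\[
H_g(\sigma) = H_f(\sigma) \cap L, \qquad T_g(\sigma) = T_f(\sigma) \cap L .
\]
For the first, a $(p+1)$-cell $\beta \in L$ with $\beta \supset \sigma$ is a cofacet of $\sigma$ in $K$, and the defining inequality $g(\beta) \preceq g(\sigma)$ is literally $f(\beta) \preceq f(\sigma)$. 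The second identity follows in the same way.

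\emph{Definition~\ref{def:mdm_function_on_regularCW} for $g$.} With these identities the four conditions are checked one at a time. Conditions (1) and (2) are immediate: $|H_g(\sigma)| \le |H_f(\sigma)| \le 1$ and $|T_g(\sigma)| \le |T_f(\sigma)| \le 1$. For condition (3), let $\beta^{(p+1)} \in L$ with $\beta \supset \sigma$ and $\beta \notin H_g(\sigma)$. Because $\beta \in L$, the identity gives $\beta \notin H_f(\sigma)$, so condition (3) for $f$ on $K$ yields $f(\beta) \succ f(\sigma)$, which is the same as $g(\beta) \succ g(\sigma)$. Condition (4) is handled symmetrically using $T$. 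The one point that needs care is to use $\beta \in L$ so that non-membership in $H_g$ transfers to non-membership in $H_f$; this is exactly what the identity above provides. Since every step is a direct unwinding of the definitions, I do not expect a real obstacle here.

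\emph{Critical cells.} If $\sigma \in L$ is critical for $f$, then $H_f(\sigma) = T_f(\sigma) = \emptyset$, and the identities give $H_g(\sigma) = T_g(\sigma) = \emptyset$, so $\sigma$ is critical for $g$. The converse can fail: if the unique element of $H_f(\sigma)$ lies outside $L$, then $\sigma$ is regular for $f$ but critical for $g$. I would note this to explain why the statement claims only one direction.
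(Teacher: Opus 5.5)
Your proposal is correct and follows essentially the same route as the paper: you compare $H_g(\sigma)$ and $T_g(\sigma)$ with $H_f(\sigma)$ and $T_f(\sigma)$, then transfer each of the four conditions and criticality directly. The only cosmetic difference is that the paper states $T_g(\sigma)=T_f(\sigma)$ outright because $L$ is closed under faces, whereas you write $T_f(\sigma)\cap L$, which is the same set for that reason.
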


\begin{proof}
Let $g = f|_L$ be the restriction of $f$ to the subcomplex $L$. We verify the four conditions of Definition \ref{def:mdm_function_on_regularCW} for $g$.

For any cell $\sigma \in L$, the set of its cofaces in $L$ is a subset of its cofaces in $K$. Therefore, $H_g(\sigma) \subseteq H_f(\sigma)$. Since $f$ is an MDM function, $|H_f(\sigma)| \le 1$, which implies $|H_g(\sigma)| \le 1$. Thus, Condition (1) holds.

Because $L$ is a regular subcomplex, it is closed under taking faces. Hence, all faces of $\sigma$ in $K$ are also in $L$, meaning $T_g(\sigma) = T_f(\sigma)$. Since $|T_f(\sigma)| \le 1$, we have $|T_g(\sigma)| \le 1$, satisfying Condition (2).

For Condition (3), let $\beta \in L$ be a coface of $\sigma$ such that $\beta \notin H_g(\sigma)$. Then $\beta \notin H_f(\sigma)$ either. Since $f$ is an MDM function, $f(\beta) \succ f(\sigma)$, which means $g(\beta) \succ g(\sigma)$.

For Condition (4), let $\alpha \in L$ be a face of $\sigma$ such that $\alpha \notin T_g(\sigma)$. Since $T_g(\sigma) = T_f(\sigma)$, $\alpha \notin T_f(\sigma)$. Thus, $f(\alpha) \prec f(\sigma)$, meaning $g(\alpha) \prec g(\sigma)$.

Therefore, $g$ is an MDM function on $L$. Finally, if $\sigma \in L$ is a critical $p$-cell of $f$, we have $|H_f(\sigma)| = 0$ and $|T_f(\sigma)| = 0$. Since $H_g(\sigma) \subseteq H_f(\sigma)$ and $T_g(\sigma) = T_f(\sigma)$, it immediately follows that $|H_g(\sigma)| = 0$ and $|T_g(\sigma)| = 0$. Thus, $\sigma$ is critical for the restricted function $g$.
\end{proof}

In the following results, we prove that an MDM function on a subcomplex $L$ of a finite regular CW complex $K$ can be extended to an MDM function on $K$.

\begin{lemma}\label{lem:extended_MDM}
Let $K$ be a finite regular CW complex, $L$ be a subcomplex of $K$ and $f : L \to \mathbb{R}^k$ an MDM function on $L$. Then, $f$ can be extended to an MDM function on $K$ such that each cell in $K\setminus L$ is critical.
\end{lemma}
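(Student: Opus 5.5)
The plan is to extend $f$ one dimension at a time, in increasing order. Each new cell $\tau\in K\setminus L$ will receive a value strictly above the values of all its faces in every coordinate. This order is possible because $L$ is a subcomplex: every face of a cell of $L$ lies in $L$. Concretely, let $M\in\R$ be larger than every coordinate of every value $f(\sigma)$, $\sigma\in L$. For a $p$-cell $\tau\in K\setminus L$ I would set
\[
F(\tau)=\bigl(M+p+1,\dots,M+p+1\bigr)\in\R^k,
\]
and $F=f$ on $L$. Alternatively, one can define the values inductively as $F(\tau)=\max_{\nu\subset\tau}F(\nu)+(1,\dots,1)$ componentwise. The explicit choice is simpler, and I will use it.

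The first key step is to check the four conditions of Definition~\ref{def:mdm_function_on_regularCW} for cells $\sigma\in L$. The faces of $\sigma$ all lie in $L$, so $T_F(\sigma)=T_f(\sigma)$, and Conditions (2) and (4) are inherited from $f$. For the cofaces $\beta\supset\sigma$ of dimension $p+1$, there are two kinds.
\begin{enumerate}
\item If $\beta\in L$, nothing changes from $f$.
\item If $\beta\notin L$, then $F(\beta)$ has every coordinate larger than $M$, hence strictly larger than every coordinate of $f(\sigma)$. So $F(\beta)\succ F(\sigma)$ and $\beta\notin H_F(\sigma)$.
\end{enumerate}
Thus $H_F(\sigma)=H_f(\sigma)$, and Conditions (1) and (3) hold. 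Consequently the critical cells of $f$ in $L$ stay critical, and the regular cells stay regular.

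The second key step concerns a $p$-cell $\tau\in K\setminus L$. Each face $\alpha$ of dimension $p-1$ has value either $f(\alpha)$ with coordinates below $M$, or $(M+p,\dots,M+p)$. In both cases $F(\alpha)\prec F(\tau)$ strictly in all coordinates, so $T_F(\tau)=\emptyset$ and Condition (4) holds. Each coface $\beta$ of dimension $p+1$ cannot lie in $L$, since $L$ is closed under faces and $\tau\notin L$. Hence $F(\beta)=(M+p+2,\dots)\succ F(\tau)$, so $H_F(\tau)=\emptyset$ and Condition (3) holds. Therefore $\tau$ is critical.

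The only genuinely delicate point is the requirement in Conditions (3) and (4) that values be strictly comparable in the partial order $\prec$, not merely incomparable. This is why I choose constant vectors that dominate in every coordinate rather than just in one. The observation that a coface of a cell outside $L$ is also outside $L$ handles the remaining case.
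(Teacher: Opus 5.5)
Your proposal is correct and is essentially the paper's proof. The paper also gives each $p$-cell outside $L$ a constant vector $(c_1+p,\dots,c_k+p)$ that dominates every value of $f$ in each coordinate, with $c_i=\max_L f_i+1$ in place of your single bound $M$. It then runs the same two-case check, using that faces of cells in $L$ lie in $L$ and cofaces of cells outside $L$ lie outside $L$. Your uniform constant and your shift by $p+1$ are cosmetic; as a small bonus, your version also covers $L=\emptyset$, where the paper's maxima are undefined.
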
 

\begin{proof}
Let $f : L \to \mathbb{R}^k$ an MDM function on $L$.  Define $g : K \to \R^k$ by
\[
g(\sigma) =
\begin{cases}
f(\sigma) & \text{if } \sigma \in L, \\
(c_1 + \dim\sigma,\, c_2 + \dim\sigma,\,
\ldots,\, c_k + \dim\sigma) & \text{if } \sigma \notin L,
\end{cases}
\]
where $c_i = \max\{f_i(\tau) \mid \tau \in L\} + 1$ for each
$1 \le i \le k$.

We verify Definition \ref{def:mdm_function_on_regularCW} for $g$ and any $p$-cell $\sigma\in K$ such that $\alpha^{(p-1)}\subset \sigma$ and $\beta^{(p+1)}\supset\sigma$.

First suppose that $\sigma\in L$. Then $g(\sigma)=f(\sigma)$, and either $\beta \in L$ or $\beta\in K\setminus L$. So $g(\beta)=f(\beta)$ or $g(\beta)\succ g(\sigma)$ by definition of $g$. Then $H_g(\sigma)=H_f(\sigma)$. Since $f$ is an MDM function,  Condition (1) and Condition (3) are satisfied. 
Now consider $\alpha$. Since $L$ is a subcomplex, then $\alpha\in L$. So, $g(\alpha)=f(\alpha)$ and $T_g(\sigma)=T_f(\sigma)$. Since $f$ is an MDM function, Condition (2) and Condition (4) are satisfied. 

Now suppose that $\sigma\notin L$. Since $L$ is a subcomplex of $K$, $\beta\notin L$. Then, by definition of $g$, $g(\beta)\succ g(\sigma)$. So $H_g(\sigma)=\emptyset$, and Condition (1) and Condition (3) are satisfied. For any face $\alpha$, $g(\alpha)\prec g(\sigma)$ by definition of $g$. So, $T_g(\sigma)=\emptyset$, and Condition (2) and Condition (4) are satisfied. 

Therefore, $g$ is an MDM function on $K$ such that each cell $\sigma\in K\setminus L$ is a critical for $g$.

\end{proof}  

\begin{lemma}\label{lem:extended_MDM_oncollapse}
Let $K$ be a finite regular CW complex, $L$ be a subcomplex of $K$ such that $K$ collapses to $L$ and $f : L \to \mathbb{R}^k$ be an MDM function on $L$. Then $f$ can be extended to a MDM function on $K$ such that there is no critical cell in $K\setminus L$.
\end{lemma}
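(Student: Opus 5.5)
Since $K$ collapses to $L$, we fix a finite sequence of elementary collapses $K=K_0 \searrow K_1 \searrow \cdots \searrow K_n = L$, where $K_{j}=K_{j-1}\setminus\{\sigma_j,\tau_j\}$ and $(\sigma_j,\tau_j)$ is a collapse pair in $K_{j-1}$: $\sigma_j$ is a free face of $\tau_j$, $\dim\tau_j=\dim\sigma_j+1$. We induct on $n$, so it suffices to handle a single elementary collapse. The statement is: if $L=K\setminus\{\sigma,\tau\}$ with $\sigma^{(p)}$ a free face of $\tau^{(p+1)}$, and $f$ is MDM on $L$, then $f$ extends to an MDM function $g$ on $K$ with $(\sigma,\tau)$ a gradient pair. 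Iterating from $K_n$ back to $K_0$ yields the lemma, because at each stage the previously added cells stay paired.

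For the single step, we choose values above everything in $L$, in the spirit of Lemma~\ref{lem:extended_MDM}. Set $c_i=\max\{f_i(\nu)\mid \nu\in L\}+1$ and define
\[
g(\sigma)=g(\tau)=(c_1,\ldots,c_k),\qquad g|_L=f .
\]
We then check Definition~\ref{def:mdm_function_on_regularCW} in three cases.

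\emph{Cells $\nu\in L$.} The faces of $\nu$ lie in $L$, so $T_g(\nu)=T_f(\nu)$. A coface of $\nu$ is either in $L$, or equals $\sigma$ or $\tau$; in the latter case its value is strictly larger in every coordinate. Hence $H_g(\nu)=H_f(\nu)$, and conditions (3) and (4) are inherited from $f$ or hold by strict dominance.

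\emph{The cell $\sigma$.} Every face $\alpha$ of $\sigma$ lies in $L$ (since $L$ is a subcomplex and $\alpha\neq\tau$), so $g(\alpha)\prec g(\sigma)$ strictly and $T_g(\sigma)=\emptyset$. Because $\sigma$ is free, $\tau$ is its only $(p+1)$-coface in $K$, and $g(\tau)=g(\sigma)$ gives $H_g(\sigma)=\{\tau\}$. Condition (3) is vacuous.

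\emph{The cell $\tau$.} It has no cofaces in $K$. Otherwise some $(p+2)$-cell $\eta\supset\tau$ would lie in $L$ (as $\eta\ne\sigma,\tau$), forcing $\tau\in L$ because $L$ is closed under faces, a contradiction. So $H_g(\tau)=\emptyset$. Its faces other than $\sigma$ are in $L$, hence strictly smaller, while $g(\sigma)=g(\tau)$ gives $T_g(\tau)=\{\sigma\}$. Conditions (2) and (4) therefore hold.

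Thus $g$ is MDM, and neither $\sigma$ nor $\tau$ is critical. The only subtlety is making sure that $\sigma$ and $\tau$ have no unexpected cofaces. This comes from freeness together with $L$ being a subcomplex, and is exactly why the collapse hypothesis is needed; with it in hand, the induction finishes the proof.
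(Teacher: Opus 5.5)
Your proposal is correct and follows essentially the same route as the paper. Both arguments re-add the collapse pairs one at a time, give both new cells the common value $(\max f_i+1)_i$, and check two things: old cells keep their $H$ and $T$ sets, and the new pair becomes a gradient pair. Your explicit argument that $\tau$ has no cofaces (because $L$ is closed under faces) spells out what the paper attributes to ``the collapse property.''
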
 

\begin{proof}
Let $f : L \to \mathbb{R}^k$ be the MDM function on $L$. Suppose that $K$ collapses to $L$. Since $K$ is a finite regular CW complex, there exists a finite number of collapse pairs in $K\setminus L$. 
Let $(\sigma_1,\tau_1), (\sigma_2,\tau_2), \ldots, (\sigma_n,\tau_n) $ be a finite sequence of collapse pairs in $K\setminus L$ such that $\dim\tau_j=\dim\sigma_j+1$, $L_j=L_{j-1}\sqcup \{\sigma_{j}, \tau_{j}\}$ for each $1\leq j \leq n$ and $L_0=L$ and $L_n=K$. 

Define a function $g_j:L_j\to \mathbb{R}^k$ by
$$ g_j(\gamma)=
\begin{cases}
g_{j-1}(\gamma) & \text{if} \  \gamma\in L_{j-1}, \\
\textbf{c}_{j-1} & \text{if} \ \gamma=\sigma_{j},\tau_{j}
\end{cases} $$
where $g_0=f$ and $\textbf{c}_{j-1}=(c_{(j-1)1}, c_{(j-1)2}, \ldots, c_{(j-1)k})$ such that $c_{(j-1)i}=max\{g_{(j-1)i}(\gamma) \ | \ \gamma\in L_{j-1}\}+1$ for each component function $g_{(j-1)i}$ of $g_{j-1}$. 

Now we show that $g_j$ is an MDM function on $L_j$ by induction on $j$ such that neither $\sigma_j$ nor $\tau_j$ is critical for $g_j$.

For $j=1$, 
$$ g_1(\gamma)=
\begin{cases}
f(\gamma) & \text{if} \  \gamma\in L, \\
\textbf{c}_0 & \text{if} \ \gamma=\sigma_{1},\tau_{1}\\
\end{cases} $$
Now, we verify Definition \ref{def:mdm_function_on_regularCW} for $g_1$. Let $\gamma$ be any $p$-cell in $L_1$. 

If $\gamma\in L$, then $g_1(\gamma)=f(\gamma)$, $f(\gamma) \prec \textbf{c}_0=g_1(\sigma_1)$ and $g_1(\gamma)\prec \textbf{c}_0=  g_1(\tau_1)$ by definition of $g_1$. So, $\sigma_1, \tau_1\notin H_{g_1}(\gamma) $ and $H_{g_1}(\gamma) = H_f(\gamma)$. 
Moreover, since $L$ is a subcomplex of $K$, neither $\sigma_1$ nor $\tau_1$ is face of $\gamma$. So, $\sigma_1, \tau_1\notin T_{g_1}(\gamma)$ and $T_{g_1}(\gamma) = T_f(\gamma)$. Since $f$ is an MDM function on $L$, each condition given in Definition \ref{def:mdm_function_on_regularCW} is satisfied. In addition, regular cells and critical cells of $f$ in $L$ are also regular and critical for $g_1$, respectively. 

Now suppose $\gamma\notin L$. Then $\gamma=\sigma_1$ or $\gamma=\tau_1$. if $\gamma=\sigma_1$, then $\tau_1$ is the unique proper coface of $\gamma$ in $L_1$ because $(\sigma_1,\tau_1)$ is a collapse pair in $L_1$. But, by definition of $g_1$, $g_1(\sigma_1)=g_1(\tau_1)$. So, $H_{g_1}(\sigma_1)=\{\tau_1 \}$. Also, any proper face $\alpha$ of $\sigma_1$ is in $L$. So, by definition of $g_1$, $g_1(\alpha)\prec g_1(\sigma_1)$, which implies that $T_{g_1}(\sigma_1)=\emptyset$. If $\gamma=\tau_1$, then $H_{g_1}(\tau_1)=\emptyset$ because there is no proper coface of $\tau_1$ in $L_1$ by the collapse property. Also, for any proper face $\alpha\in L_1$ such that $\alpha\neq \sigma_1$, $g_1(\alpha)\prec g_1(\tau_1)$. Then $\alpha\notin T_{g_1}(\tau_1)$. However, $H_{g_1}(\sigma_1)=\{\tau_1 \}$ implies that $\sigma_1\in T_{g_1}(\tau_1)$. So, $T_{g_1}(\tau_1)=\{\sigma_1 \}$.  Thus, each condition given in Definition \ref{def:mdm_function_on_regularCW} is satisfied. Also $H_{g_1}(\sigma_1)=\{\tau_1 \}$ and $T_{g_1}(\tau_1)=\{\sigma_1 \}$ implies that $\sigma_1,\tau_1 \in K\setminus L$ are not critical for $g_1$. 

For any $1\leq j < n$, suppose $g_j$ is an MDM function on $L_j$ such that neither $\sigma_j$ nor $\tau_j$ is critical for $g_j$. By a similar argument given for $g_1$, we see that the function $g_{j+1}$ is an MDM function on $L_{j+1}$ such that neither $\sigma_{j+1}$ nor $\tau_{j+1}$ is critical for $g_{j+1}$. In addition, regular cells and critical cells of $g_j$ in $L_j$ are also regular and critical for $g_{j+1}$, respectively. 

As a consequence, we obtain that $g_n:L_n\to \mathbb{R}^k$ is an MDM function on $K=L_n$ satisfying the desired properties. 
\end{proof}

\begin{remark}\label{rmk:extension_regularCW}
Before concluding this section, we want to highlight that all the results given for simplicial complexes in \cite{Brouillette2024} and concerning the definition of an MDM function on simplicial complexes can be generalized to finite regular cell complexes by using Definition \ref{def:mdm_function_on_regularCW}. Also, the notions of isolated invariant sets, Conley index, Morse decompositions and the related results given \cite{Brouillette2024} work for the finite regular CW complexes thanks to \cite{Mrozek2017}. Therefore, we state the results in following sections for finite regular CW complexes without further modifications. 
\end{remark}

\subsection{Topological Dynamics and the Morse Equation}
In classical discrete Morse theory, the Morse inequalities are established by counting individual critical cells.  Because the flow of an MDM function is multivalued, we need to approach it through topological dynamics, specifically by using discrete versions of the Conley index~\cite{Batko2020}. The multiparameter flow $\Pi_f$ of an MDM function is the multivalued dynamical system generated by the gradient paths of $f$.    

\begin{definition}[\cite{Brouillette2024}]
\label{def:isolated_invariant}
Let $K$ be a finite regular CW complex, $f : K \to \R^k$ be an MDM
function on $K$, and $S$ be a subset of cells in $K$.
\begin{enumerate}
  \item $S$ is an invariant set with respect to $\Pi_f$ if
        any complete gradient trajectory that passes through $S$ is
        entirely contained within $S$.
  \item $S$ is an isolated invariant set if there exists a
        closed subcomplex $N \supseteq S$, called an
        isolating neighbourhood of $S$, such that every
        complete gradient trajectory lying entirely within $N$ is
        contained in $S$.
\end{enumerate}
\end{definition}

Within MDM theory, a critical component naturally acts as an
invariant set: it contains both its critical cells and the
internal gradient paths that connect them. It has been shown 
that each Morse set $M([\sigma])$ is in fact an isolated 
invariant set (see~\cite[Theorem~9.4]{Mrozek2017}).
  
Following Brouillette et al. \cite{Brouillette2024}, for the isolated invariant sets that arise as Morse sets in our setting, we can explicitly construct this index pair. The isolating neighborhood is simply the closure $N = \mathrm{Cl}(S)$ such that the exit set, $L = \mathrm{Ex}(S) = \mathrm{Cl}(S) \setminus S$ through which the forward flow leaves the neighborhood is a regular cell complex. 

To clarify this definition geometrically, let us consider the MDM function 
given in Figure~\ref{fig:MDMfunction}. Define
\[
  S_1 = \bigl\{f^{-1}((8,7)),\, f^{-1}((5,5)),\, f^{-1}((5,2)),\, 
               f^{-1}((5,3))\bigr\}.
\]
The exit set $\mathrm{Ex}(S_1) = \bigl\{f^{-1}((8,8)),\, f^{-1}((3,3)),\, 
f^{-1}((5,0))\bigr\}$ is a simplicial complex, so the first condition is 
satisfied. However, $S_1$ fails to be an invariant set: the gradient path 
passing through $f^{-1}((8,7)) \in S_1$ is not entirely contained within 
$S_1$ (it exits into $\mathrm{Ex}(S_1)$ and continues beyond). Since $S_1$ 
is not even invariant, it cannot be an isolated invariant set.

Now expand the set to include the top triangle:
\[
  S_2 = S_1 \cup \bigl\{f^{-1}((8,8))\bigr\}.
\]
The exit set shrinks to $\mathrm{Ex}(S_2) = \bigl\{f^{-1}((3,3)),\, 
f^{-1}((5,0))\bigr\}$. One can verify that every gradient path passing 
through a cell in $S_2$ is entirely contained within $S_2$, so $S_2$ 
is invariant. Moreover, no path can remain indefinitely in any isolating 
neighborhood of $S_2$ without belonging to $S_2$ itself. Hence $S_2$ is 
an isolated invariant set.

To capture the local topology of such an isolated invariant set $S$, one uses this index pair $(N, L) = (\mathrm{Cl}(S), \mathrm{Ex}(S))$. Following \cite[Definition~3.8]{Brouillette2024}, the discrete Conley index
of $S$ is defined as the relative homology $H_*(N, L;\mathbb{Z})$. The $p$-th Conley coefficient of $S$, denoted $\mathrm{con}_p(S)$, is the rank of the $p$-th homology group $H_p(N, L)$.

\begin{definition} \label{def:reachability}
Let $\mathcal{V}$ be the gradient vector field of an MDM function on a finite regular CW complex $K$. For arbitrary cells $\alpha,\beta\in K$, we write $\alpha \to_{\mathcal{V}} \beta$ if there exists a finite alternating sequence
\[
\alpha \supseteq \sigma_0 \subset \tau_0 \supset \sigma_1 \subset \tau_1 \supset \cdots \supset \sigma_r \subseteq \beta
\]
such that $(\sigma_i,\tau_i)\in \mathcal{V}$ for every $i$. In all subsequent definitions, the reachability relation between arbitrary cells is understood in this sense.
\end{definition}

\begin{definition}\label{def:V_compatible}
Let $\mathcal{V}$ be the gradient vector field of an MDM function on $K$ and $S\subseteq K$. We say that $S$ is a $\mathcal{V}$-compatible subset of $K$ if, for each $\sigma\in S$ such that either $(\sigma, \tau)\in \mathcal{V}$ or $(\nu, \sigma)\in \mathcal{V}$, then $\tau\in S$ or $\nu\in S$, respectively. 
\end{definition} 

Observe that the restriction of an MDM function $f$ to a $\mathcal{V}$-compatible subcomplex $S$ of $K$ is an MDM function on $S$ with no additional critical cells. 

To formally handle the multidimensional flow, we must establish when two critical cells belong to the same topological feature.

\begin{definition}[\cite{Brouillette2024}] \label{def:dynamically_connected}
Let $\mathcal{C}$ be the set of all critical cells of an MDM function $f:K\to\mathbb{R}^k$. For any $\sigma,\tau\in\mathcal{C}$, we say that $\sigma$ is parametrically flow-connected to $\tau$, denoted $\sigma \to_f \tau$, if $\sigma \to_{\mathcal{V}} \tau$ and their function values share at least one coordinate (i.e., $f_i(\sigma) = f_i(\tau)$ for some parameter index $i \in \{1,\dots,k\}$).
\end{definition}

The parametric flow-connectedness $\sim$ on $\mathcal{C}$ is defined as the equivalence relation generated by $\to_f$. Thus, $\sigma \sim \tau$ if there exists a sequence of critical cells $\sigma = \sigma_0, \sigma_1, \dots, \sigma_m = \tau$ such that for each $j$, either $\sigma_j \to_f \sigma_{j+1}$ or $\sigma_{j+1} \to_f \sigma_j$. The equivalence classes $[\sigma] \in \mathcal{C}/{\sim}$ are precisely the critical components of the MDM function.

Because the multiparameter flow $\Pi_f$ is a multivalued dynamical system, these critical components cannot be analyzed via single isolated cells. Instead, each critical component acts as an isolated invariant set with respect to the flow. Following \cite{Brouillette2024}, we associate to each critical component $[\sigma]$ a Morse set $M([\sigma])$. The local topology of this isolated invariant set is captured by its discrete Conley index, defined as the relative homology $H_*(N, L; \mathbb{Z})$ of an isolating neighborhood pair $(N, L)$.

\begin{example}
\label{ex:components}
To illustrate the nature of critical components under the dynamical connectedness relation $\sim$, we consider the $2$-dimensional simplicial complex $K$ and the MDM function $f:K\to \mathbb{R}^2$, given in Figure \ref{fig:MDMfunction}.
\begin{figure}[htb]
\includegraphics[width=0.5\textwidth]{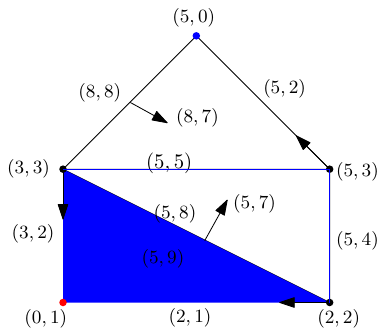}
\caption{Critical components of the MDM function given in Figure \ref{fig:MDMfunction}.}
\label{fig:Multicomponent}
\end{figure}
The set of critical cells of $f$ is $$C=\{ f^{-1}((0,1)), f^{-1}((5,0)), f^{-1}((5,4)), f^{-1}((5,5)), f^{-1}((5,9)) \}.$$ Let us denote these critical cells as $v_1=f^{-1}((0,1))$, $v_2=f^{-1}((5,0))$, $\sigma_1=f^{-1}((5,4))$, $\sigma_2=f^{-1}((5,5))$ and $\tau=f^{-1}((5,9))$.
Now we find the critical components of $f$ under the relation $\sim$. 

\begin{enumerate}[leftmargin=*]
    \item The Isolated Component (Red in Figure \ref{fig:Multicomponent}): There is a single critical vertex $v_1$  with the function value $f(v_1) = (0,1)$. Because no other critical cell in the entire complex has either a $0$ in its first parameter coordinate or  a $1$ in its second parameter coordinate, it is impossible for $v_1$ to satisfy the shared-parameter condition. Therefore, it cannot be dynamically connected to anything else and $[v_1] = \{v_1\}$.
    
    \item The Disconnected Component (Blue in  Figure \ref{fig:Multicomponent}): In the remainder of the complex, there are multiple other critical cells, including a critical triangle $\tau$ with value $f(\tau) = (5, 9)$, two critical edges  $\sigma_1$ and $\sigma_2$ with values $f(\sigma_1) = (5, 4)$ and $f(\sigma_2) = (5, 5)$, and a critical vertex $v_2$ with value $f(v_2) = (5, 0)$. 
    
    Geometrically, $v_2$ and other blue critical cells do not touch; they share no faces or vertices. In classical topological terms, they are completely disconnected. However, under the MDM framework:
    \begin{itemize}
        \item Flow Condition: The gradient field $\V$ provides a sequence of paired cells (a flow path) strictly descending from the triangle $\tau$ down through the regular interior cells, eventually reaching the edges $\sigma_1$ and $\sigma_2$. Thus, $\tau\to_f \sigma_1$ and $\tau\to_f \sigma_2$. Also, a sequence of paired cells strictly descending from the edges $\sigma_1$ and $\sigma_2$ down through the regular cells, eventually reaching the vertex $v_2$. Similarly, $\sigma_1\to_f v_2$ and $\sigma_2\to_f v_2$. 
        \item Parameter Condition: All of the blue critical cells share the exact same first parameter value $5$.
    \end{itemize}
    Because both conditions are met, we have $\tau \sim \sigma_1$, $\tau \sim \sigma_2$, $\sigma_1 \sim v_2$ and $\sigma_2 \sim v_2$.  Since the relation is an equivalence relation, we obtain $[\tau] = \{\tau, \sigma_1, \sigma_2, v_2\}$.
\end{enumerate}
By transitivity, several such geometrically separated critical cells merge into a single, unified critical component such as $[\tau]$. This highlights a crucial paradigm shift in the multiparameter setting: critical components are defined purely by their dynamical interaction with the gradient field across shared parameter values, not by their static adjacency in the simplicial complex. Consequently, when performing surgeries on connected sums (which will be detailed in our subsequent work), one must excise these entire dynamically linked Morse sets, regardless of their geometric footprint.
\end{example}

\begin{definition}[\cite{Brouillette2024}] \label{def:f_cycle}
Let $f: K \to \mathbb{R}^k$ be a MDM function on a finite regular CW complex $K$. A sequence of critical components $[\sigma_0], [\sigma_1], \dots, [\sigma_n] = [\sigma_0] \in \mathcal{C}/{\sim}$ is called an $f$-cycle if, for each $i \in \{1, 2, \dots, n\}$, there exist some $\tau'$ and $\tau$ such that $\tau' \sim \sigma_{i-1}$, $\tau \sim \sigma_i$ and $\tau' \to_f \tau$. If there is no $f$-cycle, we say $f$ is acyclic.
\end{definition}

\begin{remark} \label{rem:mdm_cycles}
It is worth noting why the concept of an $f$-cycle is necessary in the multidimensional setting. In classical discrete Morse theory ($k=1$), every gradient path induces a  decreasing sequence of real numbers, making  cycles mathematically impossible. However, the multiparameter gradient flow $\Pi_f$ is a multivalued dynamical system. Because the connectedness relation $\to_f$ only requires cells to share at least one parameter coordinate, a sequence of flow paths can loop back upon itself by alternating which coordinate remains constant across different steps. Consequently, an MDM function can possess $f$-cycles, making acyclicity a non-trivial condition that must be explicitly required.

Crucially, an acyclic MDM function still allows gradient paths to completely exit a critical component. If a path leaves a Morse set $M([\sigma])$ and  descends into a different Morse set without ever returning, this does not form an $f$-cycle. On the contrary, this downward leaking of the flow between components is the natural topological behavior that establishes the Morse decomposition.
\end{remark}

\begin{definition}[\cite{Brouillette2024}] \label{def:Morse_set}
 Let $f: K \to \mathbb{R}^k$ be an acyclic MDM function and $[\sigma]$ be a critical component of $f$. The Morse set of $[\sigma]$ is
$$
M([\sigma]) = \bigcup_{\tau,\tau’\in[\sigma]} C(\tau’,\tau),
$$
where $C(\tau’,\tau) = \bigl\{\beta\in K \;\bigm|\;
\tau’\to_\mathcal{V}\beta\to_\mathcal{V}\tau
\bigr\}$ is the set of all cells lying on $\mathcal{V}$-paths
from $\tau’$ to $\tau$.
\end{definition} 

For an acyclic MDM function, the set of critical components $\C/\sim$ induces a Morse decomposition of the complex $K$ (see   \cite[Theorem 7.8]{Brouillette2024}). The contribution of each critical component $[\sigma]$ to the global topology is measured by the Conley coefficients of its associated Morse set $M([\sigma])$. 

This yields the generalized Morse equation:
\begin{equation}
\label{eq:morse_poly}
\sum_{p=0}^{\dim K} m_p t^p = P_K(t) + (1+t)Q(t)
\end{equation}
Here, $P_K(t) = \sum \beta_p(K) t^p$ is the Poincar\'{e} polynomial of $K$. The coefficient $m_p = \sum_{[\sigma] \in \C/\sim} \con{p}{M([\sigma])}$ represents the total topological index in dimension $p$ across all critical components. 

The term $Q(t)$ is a polynomial with non-negative integer coefficients. Because the coefficients of $Q(t)$ are non-negative, equating the coefficients of $t^p$ on both sides of Equation (1) immediately yields the generalized Morse inequalities:
$$ m_p \ge \beta_p(K) \quad \text{for all } p \ge 0. $$

Geometrically, $Q(t)$ measures the extra critical structures, the ones that appear in the flow but do not actually contribute to the Betti numbers of the manifold. In classical theory, a non-zero $Q(t)$ indicates the presence of critical cells that could theoretically be canceled by gradient path modifications. In the MDM setting, $Q(t)=0$ indicates that our critical components represent the manifold's homology perfectly, with no unnecessary invariant sets involved.

\begin{lemma} \label{lem:conley_bound}
Let $f:K \to \mathbb{R}^k$ be an MDM function. Then the $p$-th Conley coefficient of the Morse set $M([\sigma])$ of a critical component $[\sigma]$ is bounded above by the number of critical $p$-cells in $M([\sigma])$.
\end{lemma}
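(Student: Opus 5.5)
The plan is to compute $H_*(N,L)$ for the index pair $(N,L)=(\cl(S),\ex(S))$, with $S=M([\sigma])$, by a discrete Morse argument carried out \emph{relative} to $L$. The relative cellular chain complex $C_*(N,L)$ is free with basis the cells of $N\setminus L=S$. Hence $\con{p}{S}=\operatorname{rank} H_p(N,L)$ is bounded by the rank of the homology of any chain complex chain-homotopy equivalent to $C_*(N,L)$. I will produce such a complex, the Morse complex, whose $p$-th chain group has rank equal to the number of critical $p$-cells in $S$.

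First, I check that $\mathcal{V}$ restricts to a matching on $S$. Let $\beta\in S$ be a regular cell. By Lemma~\ref{lem:product}, $\beta$ is paired with exactly one cell $\beta'$. I claim $\beta'\in S$. Since $\beta$ lies on a $\mathcal{V}$-path $\tau'\to_\mathcal{V}\beta\to_\mathcal{V}\tau$ between critical cells of $[\sigma]$, the path through $\beta$ can be prolonged or shortened through its partner: if $(\beta,\beta')\in\mathcal{V}$, the path reaching $\beta$ continues through $\beta'$; if $(\beta',\beta)\in\mathcal{V}$, the path leaving $\beta$ passes through $\beta'$. In either case $\beta'$ satisfies $\tau'\to_\mathcal{V}\beta'\to_\mathcal{V}\tau$, so $S$ is $\mathcal{V}$-compatible in the sense of Definition~\ref{def:V_compatible}. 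The restriction $\mathcal{V}|_S$ is therefore a partial matching on $S$ whose unmatched cells are exactly the critical cells of $f$ lying in $S$.

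Second, I need acyclicity of $\mathcal{V}|_S$, meaning there are no closed $\mathcal{V}$-paths. This holds for $\mathcal{V}$ globally. Along a $p$-path, $f(\beta_i)\preceq f(\sigma_i)$ by the pairing, and $f(\sigma_{i+1})\prec f(\beta_i)$ by condition (4) of Definition~\ref{def:mdm_function_on_regularCW}, since $\sigma_{i+1}\neq\sigma_i$ and so $\sigma_{i+1}\notin T_f(\beta_i)$. Thus $f$ strictly decreases in the partial order along any nontrivial path, and no path can close up.

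Third, I apply the algebraic Morse theory (Forman's theorem, or its algebraic version due to Sköldberg and Kozlov) to the free chain complex $C_*(N,L)$ together with the acyclic matching $\mathcal{V}|_S$ on its basis. For a pair $(\alpha,\beta)$ in the matching, the incidence coefficient $[\beta:\alpha]$ is $\pm1$ in $C_*(N,L)$ because $K$ is regular, so these pairs are invertible. The theorem then yields a chain complex homotopy equivalent to $C_*(N,L)$ whose $p$-th group is free of rank equal to the number of critical $p$-cells in $S$. Since $H_p$ of a free complex is a subquotient of its $p$-th chain group, its rank is at most the rank of that group, and the bound follows.

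The main obstacle is justifying that relative Morse reduction is legitimate, i.e.\ that matched pairs never involve cells of $L$. This is exactly the $\mathcal{V}$-compatibility of the first step, so that step is the delicate one. If the path-extension argument runs into trouble with the definition of $M([\sigma])$, I would instead use that $S$ is an isolated invariant set, whose exit set is closed, and which is convex with respect to $\to_\mathcal{V}$.
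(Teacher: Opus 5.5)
Your proposal is correct and follows essentially the paper's route. You show that $M([\sigma])$ is $\mathcal{V}$-compatible, so that no gradient pair joins a cell of $N\setminus L$ to a cell of $L$, and then compute $H_*(N,L)$ by a relative Morse complex spanned by the critical cells in $M([\sigma])$. The differences are minor: you prove acyclicity of $\mathcal{V}$ directly from condition (4) and apply algebraic Morse theory to $C_*(N,L)$, where the paper borrows a real-valued discrete Morse function with the same gradient field from \cite[Corollary~4.10]{Brouillette2024} and applies Forman's theorem; and you derive compatibility from the definition of $M([\sigma])$, where the paper asserts it from isolated invariance.

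One small fix is needed in the head case, where the phrase ``the path leaving $\beta$ passes through $\beta'$'' is not accurate. The clean argument is that the two cells of a pair are mutually reachable in one step: tail to head along the pair, and head to tail because $\to_{\mathcal{V}}$ permits passing to faces. Hence each $C(\tau',\tau)$ is closed under taking partners.
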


\begin{proof}
The $p$-th Conley coefficient of the Morse set $M([\sigma])$ is $\con{p}{M([\sigma])}$. Let us denote the isolating neighborhood pair as $N = \mathrm{Cl}(M([\sigma]))$ and $L = \mathrm{Ex}(M([\sigma]))$. Then $\con{p}{M([\sigma])}$ is defined as the rank of the relative homology group $H_p(N, L)$.

By the foundational results of MDM theory \cite[Corollary 4.10]{Brouillette2024}, there exists a classical discrete Morse function $g: K \to \mathbb{R}$ that shares the exact same critical cells and gradient pairings (the vector field $\mathcal{V}$) as the MDM function $f$. Because the Morse set $M([\sigma]) = N \setminus L$ is an isolated invariant set, it is  a $\mathcal{V}$-compatible subset of $K$. By the definition of $\mathcal{V}$-compatibility, any gradient pair in $\mathcal{V}$ containing a cell in $N \setminus L$ must have its partner also strictly within $N \setminus L$. 

Consequently, the classical discrete Morse function $g$ induces no gradient pair with one cell in $L$ and the other in $N \setminus L$. Let $h$ be the restriction of $g$ to $N$. Because no gradient pairs cross the relative boundary between the interior and the exit set, any cells located on the exit set $L$ are either critical with respect to $h$, or they are paired with other cells strictly within $L$. Therefore, the critical $p$-cells of $h$ that lie strictly within the interior $M([\sigma])$ are exactly the critical $p$-cells of $g$ in $M([\sigma])$.

By classical discrete Morse theory \cite[Theorem 7.1]{Forman1998} applied to relative pairs, the relative homology $H_p(N, L)$ is generated by the relative Morse complex, whose basis elements are precisely these critical $p$-cells contained in $M([\sigma])$. Therefore, the $p$-th Conley coefficient $\con{p}{M([\sigma])}$ cannot exceed the total number of critical $p$-cells in $M([\sigma])$.
\end{proof}

\begin{remark}
In the smooth setting, the Pareto set of a vector-valued function
$f: M \to \R^k$ consists of points where the negative gradients of the
component functions share no common descent direction
\cite{Smale1975, Wan1975}. Brouillette \cite{Brouillette2025} formalizes a
discrete analogue: a cell $\sigma \in K$ is Pareto critical if it
belongs to a connected component $C$ of the level set $L_{f(\sigma)}$ such
that $H_*(\mathrm{Cl}\,C, \mathrm{Ex}\,C) \neq 0$. The critical components
of an MDM function are closely related to the connected components of this
discrete Pareto set; in particular, a refined equivalence relation
$\sim$ \cite[Definition~6.1]{Brouillette2025} groups critical cells in
strict accordance with Pareto components, which is useful when computing
critical components algorithmically to avoid artifacts from perturbation.
\end{remark}

\section{Defining Perfect MDM Functions}
\label{sec:perfect_def}

Brouillette \cite{Brouillette2025} introduces a notion of perfection for MDM
functions based on the Pareto set structure. In this section, we propose a
complementary notion, component-perfection, which is formulated in
terms of Conley coefficients and is better suited for the connected sum
operations that we will study in a subsequent paper. We compare the two 
notions (strictly perfect versus component-perfect), prove that the former 
implies the latter, and discuss the existence of both classes.

We propose two natural extensions of perfection to the multiparameter setting.

\begin{definition}[\cite{Brouillette2025}]\label{def:strictly_perfect}
An acyclic MDM function $f: K \to \mathbb{R}^k$ on a finite regular CW complex $K$ is strictly perfect if the total number of critical $p$-cells exactly equals $\beta_p(K)$ for all $p$. 
\end{definition}

\begin{remark}
While contemporary literature (e.g., \cite{Brouillette2025}) often treats acyclicity as an implicit  property of perfect gradient vector fields, we explicitly put the acyclicity condition here. Because MDM functions can admit $f$-cycles independently of individual coordinate descent, explicitly demanding acyclicity prevents  cases where a function possesses optimal critical cells but fails to induce a well-defined Morse decomposition. This  ensures the theoretical validity of the Morse sets and their subsequent Conley index computations.
\end{remark}

\begin{definition} \label{def:component_perfect}
An acyclic MDM function $f:K \to \mathbb{R}^k$  on a finite regular CW complex $K$ is component-perfect if $m_p = \beta_p(K)$ for all $p \ge 0$. Equivalently, the polynomial $Q(t) = 0$ in the MDM Morse equation.
\end{definition}

The component-perfect definition allows a critical component to contain multiple cells, provided the discrete Conley index of its associated Morse set $M([\sigma])$ either matches the relative homology of a single $p$-cell modulo its boundary, or is completely trivial across all dimensions. This reflects the true geometry of the multidimensional setting. Because MDM criticalities behave similarly to Pareto sets, a critical component can be isolated by the flow but still have no actual impact on the global topology.

When computing component-perfect MDM functions algorithmically, care must be taken in how critical components are partitioned. As noted by Brouillette \cite{Brouillette2025}, the original dynamical connectedness relation ($\sim$) can occasionally fracture single topological features into multiple components due to minor algorithmic perturbations in the function values. To rigorously isolate the Morse sets required for excision during the connected sum operation, it is often necessary to employ the refined equivalence relation $\sim$ \cite[Def. 6.1]{Brouillette2025}, which groups critical cells in strict accordance with the connected components of the multifiltering function's Pareto set.

\begin{theorem}
\label{thm:strict_implies_component}
Every strictly perfect MDM function is component-perfect, but the converse does not hold in general.
\end{theorem}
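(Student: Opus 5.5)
The plan has two parts: the forward implication, which follows from the Conley bound together with the generalized Morse inequalities, and a counterexample for the converse.

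For the forward direction, let $f$ be strictly perfect, so $f$ is acyclic and, by \cite[Theorem 7.8]{Brouillette2024}, the critical components induce a Morse decomposition and the Morse equation \eqref{eq:morse_poly} holds. Write $c_p$ for the number of critical $p$-cells of $f$. I first check that every critical $p$-cell lies in exactly one Morse set. Each critical cell $\tau$ lies in its own Morse set, since $\tau\in C(\tau,\tau)\subseteq M([\tau])$ via the trivial path. Distinct critical components give disjoint Morse sets, because the Morse sets of a Morse decomposition are disjoint. Summing Lemma~\ref{lem:conley_bound} over all components then gives
\[
m_p=\sum_{[\sigma]\in\C/\sim}\con{p}{M([\sigma])}\le c_p .
\]
The generalized Morse inequalities give $m_p\ge\beta_p(K)$, and strict perfection gives $c_p=\beta_p(K)$. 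Hence $\beta_p(K)\le m_p\le c_p=\beta_p(K)$, so $m_p=\beta_p(K)$ for all $p$. Since acyclicity is part of both definitions, $f$ is component-perfect, and equivalently $Q(t)=0$. The only delicate point here is the disjointness and covering claim, which I take directly from the Morse decomposition theorem cited above.

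For the converse, I need an acyclic MDM function with $m_p=\beta_p(K)$ for which some $c_p>\beta_p(K)$. The natural way to get this is a single critical component that contains a cancelling pair: a critical edge $e$ and a critical vertex $v$ such that $e\to_{\V}v$ and $f_1(e)=f_1(v)$, so that $e\sim v$. Concretely, I would take $K$ to be a path of two edges $e_1=[a,b]$ and $e_2=[b,c]$, so $K$ is contractible with $\beta_0=1$ and all other Betti numbers zero. The plan is:
\begin{enumerate}
\item Make $a$ and $c$ critical vertices, make $e_2$ critical, and pair $(b,e_1)$ in $\V$.
\item Give values $f(a)=(0,0)$, $f(e_1)=(2,2)$, $f(b)=(3,3)$, $f(e_2)=(5,4)$, $f(c)=(5,1)$.
\item Check the conditions of Definition~\ref{def:mdm_function_on_regularCW}. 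At $b$: $f(e_1)\preceq f(b)$ and $f(e_2)\succ f(b)$. At $e_1$: $f(a)\prec f(e_1)\preceq f(b)$. At $e_2$: $f(b),f(c)\prec f(e_2)$. So $H_f(b)=\{e_1\}$ and every other $H$ and $T$ set is empty, making $a$, $e_2$, $c$ critical.
\end{enumerate}
The critical cells are then $a$, $c$, $e_2$, so $c_0=2>\beta_0=1$ and $f$ is not strictly perfect.

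Next I determine the components. We have $e_2\to_{\V}c$, because $c\subseteq e_2$ (the sequence with $\sigma_0=c$ and $r$ empty). Also $f_1(e_2)=f_1(c)=5$, so $e_2\sim c$. The cell $a$ shares no coordinate with $e_2$ or $c$, so it forms its own component. I must still confirm that $e_2\to_{\V}a$ does not create an $f$-cycle. The only flows between components go from $[e_2]$ down to $[a]$, and nothing returns, so $f$ is acyclic.

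It remains to compute the Conley indices, using \cite[Theorem 7.8]{Brouillette2024} for the Morse decomposition. For $[a]$, the Morse set is $\{a\}$ and $H_*(\{a\},\emptyset)$ has rank $1$ in degree $0$. For $[e_2]=\{e_2,c\}$, the Morse set is $\{e_2,c\}$, its closure is $e_2$ with both endpoints, and the exit set is $\{b\}$. The pair (interval, one endpoint) has trivial relative homology. Hence $m_0=1=\beta_0$ and $m_p=0=\beta_p$ for $p>0$, so $f$ is component-perfect.

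The main obstacle is making sure the Morse set really is $\{e_2,c\}$ with exit set $\{b\}$ under Definition~\ref{def:Morse_set}. I need to check that no path from $e_2$ to $c$ passes through $b$ and $e_1$. That holds here because from $b$ the flow continues only along $e_1$ toward $a$, never toward $c$. If the reachability definition nevertheless forced extra cells into the Morse set, I would fall back on the alternative example of a triangle boundary with an extra critical edge–vertex pair sharing a coordinate.
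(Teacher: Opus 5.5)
Your proof is correct. The forward direction follows the paper's argument: sum Lemma~\ref{lem:conley_bound} over components, then apply the generalized Morse inequalities. You also make explicit a point the paper leaves implicit. The Morse sets are pairwise disjoint, so summing the per-component bounds counts each critical $p$-cell at most once and really yields $m_p\le c_p$. Only disjointness is needed here, not the covering claim.

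For the converse you take a genuinely different route. The paper's Example~\ref{ex:component_perfect_not_strict} reuses the two-dimensional complex of Figure~\ref{fig:MDMfunction}, with a four-cell component $\{\tau,\sigma_1,\sigma_2,v_2\}$. It obtains the trivial Conley index from the long exact sequence, using contractibility of $N$ and $L$ read off the figure. You instead use a two-edge path with a single cancelling pair $e_2\sim c$ that shares the coordinate $f_1=5$.

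Your example is minimal and fully checkable by hand. The MDM conditions, the components, the acyclicity (which the paper does not verify for its own example), the Morse set $\{e_2,c\}$ and the exit set $\{b\}$ are all explicit. The index $H_*(\cl(e_2),\{b\})=0$ is then immediate. What the paper's example gives in exchange is an illustration of the point stressed in Example~\ref{ex:components}: a component may consist of geometrically non-adjacent cells, whereas in yours $c$ is a face of $e_2$.

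Two small remarks.
\begin{enumerate}
\item $T_f(e_1)=\{b\}$ is nonempty; it is the pair $(b,e_1)$ seen from above. So the phrase ``every other $H$ and $T$ set is empty'' should exclude it.
\item Your Morse-set computation uses the descending-flow reading of $\to_{\V}$. This is also the reading the paper uses, e.g.\ $M([v_1])=\{v_1\}$. Read literally, Definition~\ref{def:reachability} would let a cell reach its cofaces (put $e_1$ into $M([a])$, say). That would break the paper's own example in exactly the same way, so your counterexample stands on the same footing as the paper's.
\end{enumerate}
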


\begin{proof}
Let $f: K \to \mathbb{R}^k$ be strictly perfect, meaning it is an acyclic MDM function where the total number of individual critical $p$-cells is $c_p = \beta_p(K)$ for all $p$. 

By Lemma \ref{lem:conley_bound}, the $p$-th Conley coefficient of a Morse set is bounded above by the number of critical $p$-cells contained within it. Summing over all critical components $[\sigma] \in \mathcal{C}/{\sim}$, we have $m_p \le c_p$. 

Furthermore, because $f$ is an acyclic MDM function, it must satisfy the generalized Morse inequalities, which imply that $m_p \ge \beta_p(K)$. Combining these inequalities yields:
$$ \beta_p(K) \le m_p \le c_p = \beta_p(K) $$
Therefore, $m_p = \beta_p(K)$ for all $p$, which implies $f$ is component-perfect.

For the converse, it suffices to provide a counterexample. Example \ref{ex:component_perfect_not_strict} demonstrates an acyclic MDM function where $m_p = \beta_p(K)$ for all $p$, making it component-perfect; yet the total number of individual critical cells is greater than the corresponding Betti numbers of the complex. Thus, it is not strictly perfect.
\end{proof}

\begin{remark}
The strictly perfect condition is therefore the more rigid of the two. In particular,
a strictly perfect MDM function on $K$ induces an acyclic gradient field with
exactly $\sum_p \beta_p(K)$ fixed points.  In practice, this forces the component functions $f_1, \dots, f_k$ to share a common set of Morse pairings, a highly non-generic condition.
\end{remark}

\subsection{Strict vs. Component Perfection}
\label{subsec:strict_vs_component}
If every critical component consists of a single critical $p$-cell, its associated Morse set is precisely that cell, yielding a Conley index of $1$ in dimension $p$ and $0$ elsewhere. However, the converse is false. The component-perfect definition is strictly more general and, as we argue, much more natural for the multiparameter setting. 

In classical discrete Morse theory, each critical $p$-cell $\sigma$ is
dynamically isolated: it forms its own isolated invariant set with
respect to the gradient flow, and its Conley index
\[
  \mathrm{Con}(\{\sigma\}) = H_*(\mathrm{Cl}\,\sigma,\,\mathrm{Ex}\,\sigma)
  \;\cong\;
  \begin{cases} \mathbb{Z} & \text{in degree }p, \\ 0 & \text{otherwise,}\end{cases}
\]
has the homology of a single $p$-cell relative to its boundary. In the
multidimensional setting this geometric isolation of individual cell
is lost: critical cells cluster into components, and it is the component as
a whole, not any individual cell, that forms an isolated invariant set
with a well-defined Conley index. The Conley index of a critical component
is still nontrivial and still controls the topology; the difference is only
that it may now be carried collectively by several cells rather than by one.

\begin{example}\label{ex:component_perfect_not_strict}
 Let $f:K\to \mathbb{R}^2$ be the MDM function given in Example \ref{ex:components}.  The critical components of $f$ are $[v_1]$ and $[\tau] = \{\tau, \sigma_1, \sigma_2, v_2\}$. Moreover, the Morse set of $[v_1]$ is $M([v_1]) = \{v_1\}$ and the Morse set of $[\tau]$, $M([\tau])$, is the union of blue cells given in Figure~\ref{fig:Morsesets}. 
 \begin{figure}[htb]
\includegraphics[width=0.3\textwidth]{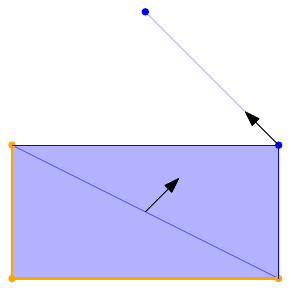}
\caption{Morse set (blue), $M([\tau])$, corresponding to the critical component $[\tau]$ in Figure \ref{fig:Multicomponent} and its exit set (orange), $\ex(M([\tau]))$.}
\label{fig:Morsesets}
\end{figure}
 Since $M([v_1]) = \{v_1\}$ is a simplicial complex, then $\ex(M([v_1])) = \emptyset$. Thus, 
$$
H_p(\cl(M([v_1])), \ex(M([v_1]))) = H_p(Cl(M([v_1])))
$$ 
and $H_p(\cl(M([v_1]))) = 0$ for $p \ge 1$ and $H_0(\cl(M([v_1]))) = \mathbb{Z}$. Then $\con{0}{M([v_1])} = 1$ and $\con{p}{M([v_1])} = 0$ for $p \ge 1$.
Now we will find $\con{p}{M([\tau])}$. Consider the index pair 
$$
(N, L) = (\cl(M([\tau])), \ex(M([\tau]))).
$$
Thanks to the following long exact sequence of the relative pair $(N, L)$,
$$ 
\dots \to H_p(L) \to H_p(N) \to H_p(N, L) \to H_{p-1}(L) \to \dots 
$$
Since $N$ and $L$ are contractible, $H_p(N) = H_p(L) = 0$ for $p \ge 1$ and $H_0(N) = H_0(L) = \mathbb{Z}$. So, $H_p(N, L) = 0$ for $p \ge 2$ and we have the following sequence:
$$ 0 \to H_1(N, L) \to \mathbb{Z} \to \mathbb{Z} \to H_0(N, L) \to 0 $$
Since $L \subseteq N$ and $N, L$ are connected, the map $\mathbb{Z} \to \mathbb{Z}$ is an isomorphism. Thus, we get $H_1(N, L) = H_0(N, L) = 0$. Then, $\con{p}{M([\tau])} = 0$ for $p \ge 0$.

Hence, $m_0 = \con{0}{M([v_1])} + \con{p}{M([\tau])} = 1 + 0 = 1$ and $m_p = \con{p}{M([v_1])} + \con{p}{M([\tau])} = 0 + 0 = 0$ for $p \ge 1$. Since the Betti numbers of the simplicial complex $K$ given in Figure~\ref{fig:Morsesets} are $\beta_0(K) = 1$ and $\beta_p(K) = 0$ for $p \ge 1$, $f$ is a component-perfect MDM. But $f$ is not strictly perfect because, for instance, the total number of critical $1$-cells of $f$, $c_1 = 2 > \beta_1(K) = 0$.
\end{example}

Example~\ref{ex:component_perfect_not_strict} highlights why we choose to work exclusively with component-perfect MDM functions. Demanding strict perfection is overly restrictive; it forces a multiparameter function to behave exactly like a $1$-dimensional function mapped symmetrically into $\R^k$. 

Conversely, component-perfection better reflects the multidimensional structure of the codomain. Recent algorithmic work demonstrates that when optimal MDM functions are generated from data, the critical cells naturally cluster into Pareto sets representing single topological features \cite{Brouillette2025}. 

\subsection{Existence of Perfect MDM Functions}
A natural question arises regarding the existence of perfect MDM functions. We can demonstrate that the class of component-perfect MDM functions on a manifold is non-empty whenever the manifold admits a classical perfect discrete Morse function.

Recall from classical discrete Morse theory that a standard perfect discrete Morse function $g: K \to \mathbb{R}$ exists for certain classes of manifolds, such as spheres, or can be obtained on specific triangulations of 3-manifolds under connected sum decompositions. Let $\mathcal{V}$ be the gradient vector field of such a function $g$, which is known to be acyclic.

For the strictly perfect case, the existence is theoretically straightforward but geometrically rigid. By \cite[Lemma 4.8]{Brouillette2024}, if we define a vector-valued function $f = (g, g, \dots, g): K \to \mathbb{R}^k$, then $f$ is an MDM function and its gradient vector field is identically $\mathcal{V}$. Because $g$ is a classical perfect discrete Morse function, the fixed points of $\mathcal{V}$ exactly match the Betti numbers of $K$. Consequently, $f$ has exactly $\beta_p(K)$ critical cells of index $p$, satisfying the condition for a strictly perfect MDM function. By Theorem \ref{thm:strict_implies_component}, $f$ is also component-perfect. This diagonal construction proves that the class of component-perfect MDM functions is non-empty.

While the diagonal construction is highly constrained due to the requirement that $H_f(\sigma) = \bigcap_{i=1}^k H_{f_i}(\sigma)$, it serves as a baseline. In practice, the component-perfect definition allows for much more flexible geometry. Recent algorithmic work provides motivation for this flexibility: when MDM functions are approximated from data, critical cells naturally cluster into Pareto sets representing single topological features. Therefore, since critical cells naturally group together into connected components in applications, component-perfection provides the right framework for our work on connected sums.

Now we show that the existence of a component-perfect MDM function on a closed, connected, orientable and triangulated manifold can be reduced to the existence of a component-perfect MDM function on its spine, where the spine of a closed $n$-manifold is defined as follows:

\begin{definition}\label{def:spine}
Let $M$ be a compact, connected and triangulated $n$-manifold with boundary and $N$ be a subcomplex of $M$ of dimension $d$ such that $d\leq n-1$. If $M$ collapses onto $N$ and there are no further collapses on $N$, then $N$ is called a spine of $M$. 

However, if $M$ is closed, then the spine of $M$ is defined as the spine of $M\setminus \sigma^{(n)}$ where $\sigma^{(n)}$ is an open $n$-cell. 
\end{definition}

\begin{theorem}\label{thm:component-perfect_on_spine}
Let $M$ be a closed, connected, orientable and triangulated $n$-manifold. If $M$ has a spine that admits a component-perfect MDM function $g$, then we can extend $g$ to a component-perfect MDM function $f$ on $M$. 
\end{theorem}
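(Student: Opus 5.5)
Let $N$ be the spine of $M$ carrying $g$. By Definition~\ref{def:spine}, $N$ is a spine of $M\setminus\sigma^{(n)}$ for some open $n$-cell $\sigma=\sigma^{(n)}$, so $M\setminus\sigma$ collapses onto $N$. The plan is to extend $g$ in two stages and then check the Morse equation.

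First, apply Lemma~\ref{lem:extended_MDM_oncollapse} to the collapse $M\setminus\sigma \searrow N$. This gives an MDM function $g'$ on $M\setminus\sigma$ extending $g$ with no critical cells in $(M\setminus\sigma)\setminus N$. The proof of that lemma assigns each new collapse pair a value strictly above everything already defined, so every new pair is a gradient pair and is not a critical cell. Second, apply Lemma~\ref{lem:extended_MDM} to the subcomplex $M\setminus\sigma\subset M$. This gives an MDM function $f$ on $M$ with $\sigma$ as its only new critical cell. Its value $(c_1+n,\dots,c_n+n)$ strictly dominates every other value in every coordinate. The critical cells of $f$ are therefore exactly the critical cells of $g$ in $N$, together with $\sigma$.

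Next, check acyclicity and the component structure. Since $f_i(\sigma)$ exceeds $f_i$ of every other cell, $\sigma$ shares no coordinate with any other critical cell, so $[\sigma]=\{\sigma\}$. Any $f$-cycle therefore avoids $[\sigma]$ and would live among components of critical cells of $N$. Here I must argue that $\to_f$ relations between critical cells of $N$ under $f$ are the same as under $g$. I would show that $\mathcal{V}_f$-paths starting and ending in $N$ cannot enter the collapsed region. Every added pair $(\sigma_j,\tau_j)$ has $\sigma_j\notin N$, and $N$ is closed under faces, so a gradient path, which steps down to faces, never leaves $N$ once in it. Hence the components and the reachability among them are those of $g$, and $f$ is acyclic because $g$ is. The same observation shows that the Morse sets $M([\tau])$ for $[\tau]$ in $N$ coincide with those of $g$. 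The closures and exit sets also coincide, since they are computed from faces inside $N$. So the Conley coefficients are unchanged, while $M([\sigma])=\{\sigma\}$ contributes $\mathrm{con}_n=1$ and nothing else, using $H_*(\overline{\sigma},\partial\sigma)$.

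Finally, compare the counts. We get $m_p(f)=m_p(g)=\beta_p(N)$ for $p<n$ and $m_n(f)=1$, using $\dim N\le n-1$. Since $N\simeq M\setminus\sigma$, and $M$ is closed, connected and orientable, $\beta_p(M)=\beta_p(M\setminus\sigma)=\beta_p(N)$ for $p\le n-2$ and $\beta_n(M)=1$. The main obstacle is degree $n-1$. The long exact sequence of $(M,M\setminus\sigma)$ with $H_*(M,M\setminus\sigma)\cong H_*(D^n,S^{n-1})$ gives
\[0\to H_n(M\setminus\sigma)\to H_n(M)\to \mathbb{Z}\to H_{n-1}(M\setminus\sigma)\to H_{n-1}(M)\to 0.\]
By orientability, $H_n(M)\cong\mathbb{Z}$ maps isomorphically onto $H_n(M,M\setminus\sigma)\cong\mathbb{Z}$ (the fundamental class restricts to a generator), and $H_n(M\setminus\sigma)=0$. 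So $H_{n-1}(M\setminus\sigma)\cong H_{n-1}(M)$, which gives $m_p=\beta_p(M)$ for all $p$. I expect the most care to be needed in the path-locality argument of the second step, namely that no gradient path from a critical cell of $N$ passes through the added pairs.
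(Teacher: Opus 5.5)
Your proposal is correct and follows essentially the same route as the paper: extend $g$ across the collapse $M\setminus\sigma^{(n)}\searrow N$ via Lemma~\ref{lem:extended_MDM_oncollapse}, then give $\sigma^{(n)}$ a value that strictly dominates every other value, so that $[\sigma^{(n)}]=\{\sigma^{(n)}\}$ contributes only $\mathrm{con}_n=1$, and finally match Betti numbers, with your long exact sequence of $(M,M\setminus\sigma^{(n)})$ standing in for the paper's appeal to Mayer--Vietoris. Your explicit path-locality argument and your use of orientability in degree $n-1$ supply details the paper only asserts (minor typo: the new value should read $(c_1+n,\dots,c_k+n)$).
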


\begin{proof}
Let $N$ be the spine of $M$ that admits a component-perfect MDM function $g:N\to \mathbb{R}^k$. By Definition \ref{def:spine}, there exists an open $n$-cell $\sigma^{(n)}\in M$ such that $M\setminus \sigma^{(n)}$ collapses onto $N$. Then $M\setminus \sigma^{(n)}$ and $N$ are homotopy equivalent. So, $\beta_
p(M\setminus \sigma^{(n)})=\beta_
p(N)$ for each $0\leq p \leq n-1$. Also, by Mayer-Vietoris sequence for homology, $\beta_
p(M\setminus \sigma^{(n)})=\beta_
p(M)$ for each $0\leq p \leq n-1$.

Since $M\setminus \sigma^{(n)}$ collapses onto $N$, by Lemma \ref{lem:extended_MDM_oncollapse}, $g$ can be extended to an MDM function $g':M\setminus \sigma^{(n)}\to \mathbb{R}^k$ on $M\setminus \sigma^{(n)}$ such that the critical cells of $g'$ on $M\setminus \sigma^{(n)}$ are the critical cells of $g$ on $N$. So, the critical components of $g'$ are the critical components of $g$. Moreover, the Morse set of any critical component with respect to $g'$ coincides with the Morse set with respect to $g$. Since $g$ is component-perfect, then 

\begin{equation}\label{eq:conleycoefficient}
  m_p^{g'}=m_p^{g}=\beta_p(N)=\beta_p(M\setminus \sigma^{(n)})=\beta_
p(M)  
\end{equation}
for each $0\leq p \leq n-1$, where $m_p^{g'}$ (or $m_p^{g'}$)  denotes the $p$-th Conley coefficient with respect to $g'$ (or $g$). In addition, since $g$ is acyclic by Definition \ref{def:component_perfect}, then also $g'$ is acyclic.

Now, let us define a function $f:M\to \mathbb{R}^k$ by 
\[
f(\sigma) =
\begin{cases}
g'(\alpha) & \text{if } \alpha \in M\setminus \sigma^{(n)}, \\
(c_1,\, c_2,\,
\ldots,\, c_k) & \text{if } \alpha=\sigma^{(n)},
\end{cases}
\]
where $c_i = \max\{g'_i(\tau) \mid \tau \in M\setminus \sigma^{(n)} \} + 1$ for each component function $g'_i$.  Since $g^{\prime}$ is an MDM function on $M\setminus \sigma^{(n)}$, one can easily show that all conditions given in Definition~\ref{def:mdm_function_on_regularCW} are satisfied for $f$ and any cell in $M$. So, $f$ is an MDM function on $M$.  Since $f|_{ M\setminus \sigma^{(n)}}=g'$, by the definition of $f$, critical cells of $g'$ are also critical for $f$. Moreover, $\sigma^{(n)}$ is critical for $f$ such that, for each parameter index $i\in \{1,2,\ldots,k\}$, $f_i(\sigma)\neq f_i(\beta)$ for any critical cell $\beta$ of $f$ different from $\sigma^{(n)}$. So, $[\sigma^{(n)}]=\{\ \sigma^{(n)}\}$ and 
\[
  \mathrm{Con}(\{\sigma^{(n)}\}) = H_*(\mathrm{Cl}\,\sigma,\,\mathrm{Ex}\,\sigma)
  \;\cong\;
  \begin{cases} \mathbb{Z} & \text{in degree }n, \\ 0 & \text{otherwise,}\end{cases}
\]

Then, the Conley coefficient $m_p=0$ if $p\neq n$ and $m_n=1$ for $[\sigma^{(n)}]$ with respect to $f$.  So, by the equality given in \ref{eq:conleycoefficient}, we get 
$$ m_p^f=m_p^{g'}=\beta_p(M) \ \text{for} \  0\leq p \leq n-1, \ \text{and} \ m_n^f=1,$$ where $m_p^f$ denotes the $p$-th Conley coefficient with respect to $f$.
Since $M$ is closed and orientable by assumption, $\beta_n(M)=1$. Thus, $m_p^f=\beta_p(M)$ for each $0\leq p \leq n$.   Moreover, $f$ is acyclic thanks to the acyclicity of $g^{\prime}$ and the definition of $f(\sigma^{(n)})$.

Hence, $f$ is a component-perfect MDM function on $M$. 
\end{proof}

\bibliographystyle{alpha}

\end{document}